\newtheorem{theorem}{Theorem}
\newtheorem{lemma}{Lemma}
\newtheorem{corollary}{Corollary}
\newtheorem{remark}{Remark}
\newcommand{\beq}{\begin{equation*}}
\newcommand{\eeq}{\end{equation*}}
\newcommand{\beqn}{\begin{equation}}
\newcommand{\eeqn}{\end{equation}}
\newcommand{\RR}{\mathbb R}
\newcommand{\PP}{\mathbb{P}}
\newcommand{\dd}{\mathrm{d}}
\newcommand{\ii}{\mathrm{i}}
\newcommand{\Q}{\mathcal{Q}_\lambda}
\newcommand{\C}{\mathcal{C}_\lambda}
\newcommand{\Ol}{\mathcal{O}}
\newcommand{\Lt}{\mathcal{L}_t}
\newcommand{\K}{\mathcal{K}}
\newcommand{\R}{\mathcal{R}}
\newcommand{\la}{\lambda}
\newcommand{\T}{\mathcal{T}_\la}
\newcommand{\F}{\mathcal{F}}
\newcommand{\om}{\omega}
\newcommand{\ka}{\kappa}
\newcommand{\ph}{\varphi}
\newcommand{\tta}{\tilde{\tau}}
\newcommand{\A}{\mathcal{A}}
\newcommand{\PT}{\mathcal{P}}
\newcommand{\X}{\mathcal{X}}
\newcommand{\Y}{\mathcal{Y}}
\newcommand{\Z}{\mathcal{Z}}
\newcommand{\g}{\gamma}
\newcommand{\ve}{\varepsilon}
\newcommand{\G}{\mathcal{G}}
\newcommand{\ot}{\widetilde{\om}}
\begin{document}

\title{The extended oloid and its inscribed quadrics}
\author{Uwe B\"asel and Hans Dirnb\"ock}
\date{
}
\maketitle

\begin{abstract}
\noindent  The oloid is the convex hull of two circles with equal radius in perpendicular planes so that the center of each circle lies on the other circle. It is part of a developable surface which we call {\em extended oloid}. We determine the tangential system of all inscribed quadrics $\Q$ of the extended oloid $\Ol$ where~$\la$ is the system parameter. From this result we conclude parameter equations of the touching curve $\C$ between $\Ol$ and~$\Q$, the edge of regression $\R$ of $\Ol$, and the asymptotes of $\R$. Properties of the curves $\C$ are investigated, including the case that $\la\rightarrow\pm\infty$. The self-polar tetrahedron of the tangential system $\Q$ is obtained. The common generating lines of $\Ol$ and any ruled surface $\Q$ are determined. Furthermore, we derive the curves which are the images of $\C$ and $\R$ when $\Ol$ is developed onto the plane.\\[0.2cm]
\textbf{Mathematics Subject Classification:} 51N05, 53A05\\[0.2cm]
\textbf{Keywords:} oloid, extended oloid, developable, tangential system of quadrics, touching curve, edge of regression, self-polar tetrahedron, ruled surface
\end{abstract}

\section{Introduction}

The oloid was discovered by Paul Schatz in 1929. It is the convex hull of two circles with equal radius $r$ in perpendicular planes so that the center of each circle lies on the other circle. The oloid has the remarkable properties that it develops its entire surface while rolling, and its surface area is equal to $4\pi r^2$. The surface of the oloid is part of a developable surface. \cite{Dirnboeck_Stachel}, \cite{Wikipedia}

\begin{figure}[h]
\begin{center}
  \includegraphics[scale=0.38]{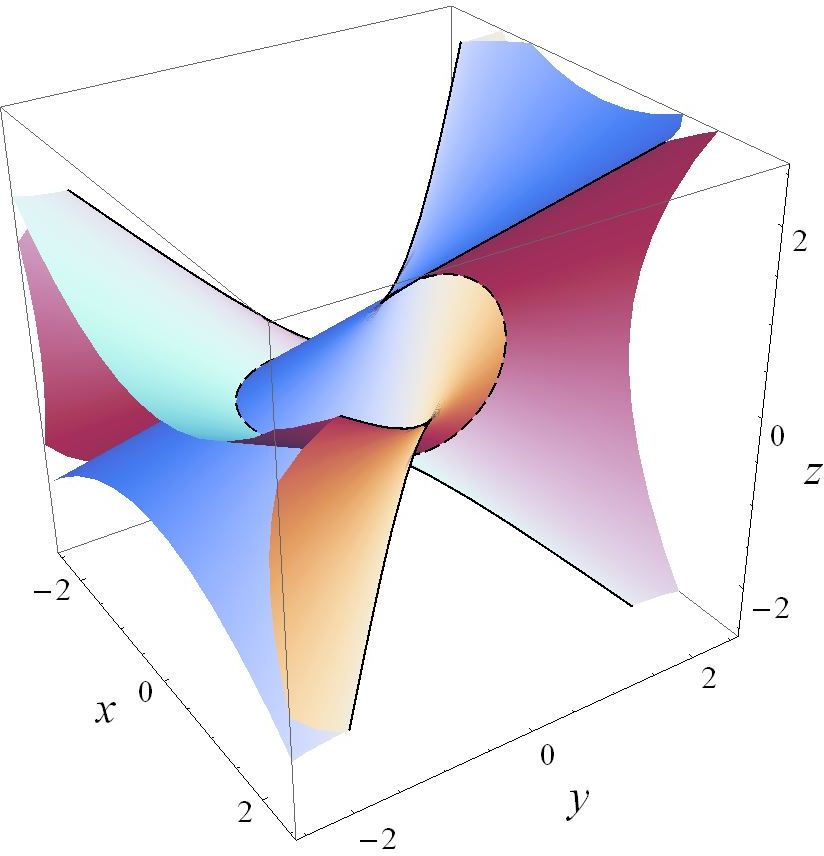}
  \caption{\label{B01} The extended oloid $\Ol$, the circles $k_A$, $k_B$ (dashed lines), and the edge of regression $\R$ (solid lines) in the box $-2.5\leq x,y,z\leq 2.5$}
\end{center}
\vspace{-0.2cm}
\end{figure}

In the following this developable surface is called {\em extended oloid}. According to \cite[pp.\ 105-106]{Dirnboeck_Stachel}, with $r=1$ the circles can be defined by 
\begin{align*}
  k_A := {} & 
  \left\{(x,y,z)\in\RR^3\;\big|\;x^2+\left(y+\tfrac{1}{2}\right)^2 = 1
  \wedge z=0\right\},\\
  k_B := {} & 
  \left\{(x,y,z)\in\RR^3\;\big|\,\left(y-\tfrac{1}{2}\right)^2+z^2 = 1
  \wedge x=0\right\}.  
\end{align*}
In this case we denote the extended oloid by $\Ol$ (see Fig.\ \ref{B01}). 

Now we introduce homogeneous coordinates $x_0,x_1,x_2,x_3$ with
\beq
  x=\frac{x_1}{x_0}\,,\quad y=\frac{x_2}{x_0}\,,\quad z=\frac{x_3}{x_0}\,.
\eeq
Then the real projective space is given by
\beq
  \PP_3(\RR)
  = \left\{[x_0,x_1,x_2,x_3]\mid(x_0,x_1,x_2,x_3)\in\RR^4\setminus\{0\}
    \right\},
\eeq
where $[x_0,x_1,x_2,x_3]=[y_0,y_1,y_2,y_3]$ if there exist a $\mu\in\RR\setminus\{0\}$ such that $x_j=\mu y_j$ for $j=1,2,3$. Should it prove necessary, complex coordinates will be used instead of the real ones. For the description of the corresponding projective circles $\K_A$ and $\K_B$ to $k_A$ and $k_B$, respectively, we write
\begin{align*}
  \K_A := {} & \left\{[x_0,x_1,x_2,x_3]\in\mathbb{P}_3(\RR)\;\big|\;
		 \ph_A(x_0,x_1,x_2,x_3) = 0 \wedge x_3=0\right\},\\[0.05cm]
  \K_B := {} & \left\{[x_0,x_1,x_2,x_3]\in\mathbb{P}_3(\RR)\;\big|\;
		 \ph_B(x_0,x_1,x_2,x_3) = 0 \wedge x_1=0\right\}  
\end{align*}
with
\begin{align*}
  \ph_A(x_0,x_1,x_2,x_3) = {} & 3x_0^2-4x_0x_2-4x_1^2-4x_2^2\,,\\[0.05cm]
  \ph_B(x_0,x_1,x_2,x_3) = {} & 3x_0^2+4x_0x_2-4x_2^2-4x_3^2\,. 
\end{align*}

\section{Inscribed quadrics}

\begin{lemma}
The dual figures to $\K_A$ and $\K_B$ are
\begin{align*}
  \widehat{\K}_A = {} &  
  \left\{[u_0,u_1,u_2,u_3]\in\mathbb{P}_3(\RR)\;\big|\;
	4u_0^2-4u_0u_2-4u_1^2-3u_2^2 = 0\right\}\quad\mbox{and}\\
  \widehat{\K}_B = {} & 
  \left\{[u_0,u_1,u_2,u_3]\in\mathbb{P}_3(\RR)\;\big|\;
  4u_0^2+4u_0u_2-3u_2^2-4u_3^2 = 0\right\},
\end{align*}
respectively, where $u_0,u_1,u_2,u_3$ are homogeneous plane-coordinates. $\widehat{\K}_A$ and $\widehat{\K}_B$ are elliptic cylinders. Their respective non homogeneous equations are
\beq
  \frac{u^2}{\left(\frac{2}{\sqrt{3}}\right)^2} + 
  \frac{\left(v+\frac{2}{3}\right)^2}{\left(\frac{4}{3}\right)^2} = 1
  \;\wedge\; -\infty<w<\infty\,,
\eeq
and
\beq
  -\infty<u<\infty \;\wedge\;
  \frac{\left(v-\frac{2}{3}\right)^2}{\left(\frac{4}{3}\right)^2} +
  \frac{w^2}{\left(\frac{2}{\sqrt{3}}\right)^2} = 1\,.
\eeq
\end{lemma}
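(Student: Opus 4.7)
My approach exploits the observation that $\K_A$ is a planar conic, lying in the hyperplane $x_3=0$; hence a plane $\pi\subset\mathbb{P}_3$ is tangent to $\K_A$ if and only if its trace $\pi\cap\{x_3=0\}$ is a tangent line of $\K_A$ within that plane. Since the $\mathbb{P}_2$-line coordinates of $\pi\cap\{x_3=0\}$ are exactly $[u_0,u_1,u_2]$, \emph{independent} of $u_3$, the dual figure $\widehat{\K}_A\subset\mathbb{P}_3^{\ast}$ is automatically a cylinder along the $u_3$-axis. This accounts both for the absence of $u_3$ from the quadratic form and for the factor $-\infty<w<\infty$ in the non-homogeneous description, and reduces the problem to a purely two-dimensional dual-conic calculation in the plane $x_3=0$.

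Concretely, I would restrict $\ph_A$ to $x_3=0$ and read off the symmetric matrix
\beq
  M_A=\begin{pmatrix} 3 & 0 & -2 \\ 0 & -4 & 0 \\ -2 & 0 & -4 \end{pmatrix}
\eeq
representing $\K_A$ as a point conic in $[x_0,x_1,x_2]$. By the standard point/line duality, the dual line-conic in $\mathbb{P}_2^{\ast}$ has matrix $\mathrm{adj}(M_A)$; a short cofactor computation (with $\det M_A=64$) gives
\beq
  \mathrm{adj}(M_A)=4\begin{pmatrix} 4 & 0 & -2 \\ 0 & -4 & 0 \\ -2 & 0 & -3 \end{pmatrix},
\eeq
and hence the dual equation $4u_0^2-4u_0u_2-4u_1^2-3u_2^2=0$. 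Re-embedding into $\mathbb{P}_3^{\ast}$ leaves this equation unchanged (it involves no $u_3$), producing the asserted cylindrical quadric $\widehat{\K}_A$.

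To exhibit the stated non-homogeneous form, I dehomogenize via $u=u_1/u_0$, $v=u_2/u_0$, $w=u_3/u_0$, obtaining $4u^2+3v^2+4v-4=0$; completing the square in $v$ yields $u^2/\tfrac43+(v+\tfrac23)^2/\tfrac{16}{9}=1$, which is exactly the claimed elliptic cylinder with semi-axes $2/\sqrt3$ and $4/3$ and rulings parallel to the $w$-direction.

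The corresponding statement for $\widehat{\K}_B$ follows either by repeating the identical procedure with $\ph_B$ restricted to $x_1=0$ — so that $u_1$ is now the free cylinder coordinate and $u$ the free axial variable — or by invoking the isometry $(x,y,z)\mapsto(z,-y,x)$, which interchanges $k_A\leftrightarrow k_B$ and induces the corresponding permutation on plane coordinates. I anticipate no conceptual obstacle; the only real pitfall is the sign bookkeeping in the $3\times3$ adjugate and remembering which variable plays the role of the free cylinder axis in each of the two cases.
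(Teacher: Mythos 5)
Your proposal is correct: the matrix $M_A$, its determinant $64$, the adjugate, the resulting dual equation $4u_0^2-4u_0u_2-4u_1^2-3u_2^2=0$, and the completion of the square all check out, and the symmetry $(x,y,z)\mapsto(z,-y,x)$ does carry $k_A$ to $k_B$. The computational core is the same as in the paper --- both amount to inverting the polarity of the conic --- but you organize it differently. The paper works directly in $\mathbb{P}_3$: it applies the gradient map $\mu u_i=\partial\ph_A/\partial x_i$ to the (degenerate) quadric $\ph_A=0$, obtains $\mu u_3=0$, solves the remaining $3\times3$ linear system for $x_0,x_1,x_2$, and substitutes into the incidence relation $\sum u_ix_i=0$; the fact that $u_3$ ends up as a free coordinate is read off from the computation rather than argued in advance. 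You instead reduce to a genuinely two-dimensional problem by first observing that a plane is tangent to the planar conic $\K_A$ exactly when its trace on $x_3=0$ is a tangent line, so that $\widehat{\K}_A$ must a priori be a cylinder in the $u_3$-direction, and then invoke the standard adjugate formula for the dual of a nondegenerate plane conic. Your version buys a cleaner conceptual justification for the cylindrical shape of the dual (which in the paper is somewhat implicit in the handling of the equation $\mu u_3=0$), at the cost of needing the dual-conic-equals-adjugate fact as a citation or a one-line derivation; the paper's version is self-contained linear algebra but leaves the role of $u_3$ slightly less transparent. Either route is acceptable.
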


\begin{proof}
Following \cite[pp.\ 41-42]{Fladt_Baur}, \cite[pp.\ 160, 164-165]{Kommerell}, we determine $\widehat{\K}_A$. With 
\beq
  \mu u_i = \frac{\partial\ph_A}{\partial x_i} \,,\quad i\in\{0,1,2,3\}\,,
\eeq
we get
\beq
  \mu u_0 = 6x_0-4x_2 \,,\quad \mu u_1 = -8x_1 \,,\quad 
  \mu u_2 = -4x_0-8x_2 \,,\quad \mu u_3 = 0\,.
\eeq
Solving this system of linear equations for $x_0$, $x_1$, $x_2$ delivers
\beq
  x_0 = \mu\left(\frac{1}{8}\,u_0-\frac{1}{16}\,u_2\right),\quad
  x_1 = \mu\left(-\frac{1}{8}\,u_1\right),\quad
  x_2 = \mu\left(-\frac{1}{16}\,u_0-\frac{3}{32}\,u_2\right).
\eeq
So we get
\beq
  0 = u_0x_0+u_1x_1+u_2x_2+u_3x_3 =
  \frac{1}{32}\,\mu\left(4u_0^2-4u_0u_2-4u_1^2-3u_2^2\right);
\eeq
hence
\beq
  \widehat{\K}_A :=  
  \left\{[u_0,u_1,u_2,u_3]\in\mathbb{P}_3(\RR)\;\big|\;
  4u_0^2-4u_0u_2-4u_1^2-3u_2^2 = 0\right\}.
\eeq 
Analogously, one finds $\widehat{\K}_B$ from $\K_B$. With $u_0=1$, $u_1=u$, $u_2=v$, $u_3=w$, the non homogeneous polynomials for $\widehat{\K}_A$ and $\widehat{\K}_B$ follow immediately. 
\end{proof}

\begin{remark}
{\em The equations $4u_0^2-4u_0u_2-4u_1^2-3u_2^2=0$ and $4u_0^2+4u_0u_2-3u_2^2-4u_3^2=0$ of $\widehat{\K}_A$ and $\widehat{\K}_B$, respectively, were already given in \cite[p.\ 115]{Dirnboeck_Stachel}.}
\end{remark}

\begin{theorem}
The inscribed quadrics of the extended oloid $\Ol$ are given by
\beq
 \Q = \left\{(x,y,z)\in\RR^3\;\big|\;f_\lambda(x,y,z)=0\right\}   
\eeq
with
\beq
  f_\lambda(x,y,z)
  = \frac{x^2}{1-\lambda}+\frac{\left(y-\lambda+\frac{1}{2}\right)^2}
	{1-\lambda+\lambda^2}+\frac{z^2}{\lambda}-1\,.
\eeq
\end{theorem}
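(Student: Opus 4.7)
The approach I would take is via projective duality. Since $\Ol$ is a developable surface containing $k_A$ and $k_B$, every tangent plane of $\Ol$ contains a tangent line of $k_A$ and a tangent line of $k_B$, hence belongs to $\widehat{\K}_A \cap \widehat{\K}_B$. A quadric $\Q$ is inscribed in $\Ol$ precisely when every tangent plane of $\Ol$ is also a tangent plane of $\Q$, i.e.\ when the tangential (dual) quadric $\widehat{\mathcal{Q}}$ contains the quartic curve $\widehat{\K}_A \cap \widehat{\K}_B$. The classical fact that the quadrics through such a complete intersection form a pencil then forces
\[
\widehat{\mathcal{Q}} \;=\; (1-\la)\,\widehat{\K}_A + \la\,\widehat{\K}_B
\]
(read as an identity of tangential polynomials), with the parameter normalised so that $\la = 0$ and $\la = 1$ reproduce the two degenerate primal members $\K_A$ and $\K_B$.

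Next I would assemble the $4\times 4$ Gram matrix $\widehat M_\la$ of this pencil directly from the Lemma. Because $u_1$ and $u_3$ appear only squared, $\widehat M_\la$ is block diagonal: a $2\times 2$ block coupling $u_0$ and $u_2$, together with scalar entries $-4(1-\la)$ and $-4\la$ on $u_1^2$ and $u_3^2$. A short calculation yields the $2\times 2$ determinant $-16(1-\la+\la^2)$, so the inverse matrix $\widehat M_\la^{-1}$---which, up to an overall projective scalar, is the Gram matrix of the primal quadric $\Q$---is elementary to write down. Reading off its entries gives the homogeneous equation of $\Q$ with a single mixed term $x_0 x_2$ and no other cross terms.

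Finally, completing the square in $x_2$ translates the centre of $\Q$ to $y = \la - \tfrac12$ and produces the $y$-term $(x_2 - (\la - \tfrac12) x_0)^2 / (1-\la+\la^2)$. The crucial algebraic simplification is the identity
\[
3 + (1-2\la)^2 \;=\; 4(1-\la+\la^2),
\]
which collapses the coefficient of $x_0^2$ to exactly $-1$ after a suitable overall rescaling; dividing by $x_0^2$ then produces $f_\la(x,y,z) = 0$. I expect the main obstacle to be the projective bookkeeping---keeping the overall scale consistent through both the matrix inversion and the completion of the square so that the constant term emerges as exactly $-1$, which is precisely what the identity above guarantees---together with a separate check that the degenerate values $\la = 0, 1$ recover the circles $k_A$ and $k_B$.
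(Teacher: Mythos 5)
Your proposal is correct and follows essentially the same route as the paper: form the tangential pencil $(1-\la)\widehat{\K}_A+\la\widehat{\K}_B$ in plane-coordinates and dualize back to point-coordinates (the paper does this by solving the linear system $\sigma x_j=\partial F_\la/\partial u_j$ for the $u_j$, which is exactly your Gram-matrix inversion), then complete the square using $3+(1-2\la)^2=4(1-\la+\la^2)$. Your added justification that the inscribed duals must form a pencil through $\widehat{\K}_A\cap\widehat{\K}_B$ is a welcome elaboration of what the paper simply cites from Sommerville, but it does not change the argument.
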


\begin{proof}
For abbreviation we put
\begin{align*}
  F_0(\bar{u}) := {} & 4u_0^2-4u_0u_2-4u_1^2-3u_2^2\,,\quad
  F_1(\bar{u}) := 4u_0^2+4u_0u_2-3u_2^2-4u_3^2 
\end{align*}
with $\bar{u}:=[u_0,u_1,u_2,u_3]$. Then
\beq
  \widehat{\F}_\la := \left\{\bar{u}\in\mathbb{P}_3(\RR)\;|\;
  F_\la(\bar{u})=0\right\}
\eeq
with
\beq
  F_\la(\bar{u}):=(1-\lambda)\,F_0(\bar{u})+\lambda\,F_1(\bar{u})
\eeq
defines a tangential system of quadrics in plane-coordinates (cp.\ \cite[p.\ 253]{Sommerville}). Now we shall determine the point-coordinate representation $\F_\la$ of $\widehat{\F}_\la$. Due to duality (see \cite[p.\ 163]{Kommerell}) we have
\beq
  \sigma x_j
  = \frac{\partial F_\lambda(\bar{u})}{\partial u_j}
  = (1-\lambda)\,\frac{\partial F_0(\bar{u})}{\partial u_j}
  + \lambda\,\frac{\partial F_1(\bar{u})}{\partial u_j}\,,\quad
  j=0,1,2,3.
\eeq
The calculation of the partial derivatives yields
\begin{align*}
  \sigma x_0
  = {} & (1-\lambda)(8u_0-4u_2)+\lambda(8u_0+4u_2)
  = 8u_0+4(2\lambda-1)u_2\,,\\
  \sigma x_1 
  = {} & \!-\!8(1-\lambda)u_1\,,\\
  \sigma x_2
  = {} & (1-\lambda)(-4u_0-6u_2)+\lambda(4u_0-6u_2)
  = 4(2\lambda-1)u_0-6u_2\,,\\
  \sigma x_3
  = {} & -8\lambda u_3\,.
\end{align*}
Solving this system of linear equations for $u_0,\ldots,u_3$ delivers
\begin{align*}
  u_0
  = {} & \sigma\,\frac{3x_0-2(1-2\la)x_2}{32(1-\la+\la^2)}
		\,,\quad\:\, u_1 = -\sigma\,\frac{x_1}{8(1-\la)}\,,\\
  u_2
  = {} & \!-\!\sigma\,\frac{(1-2\la)x_0+2x_2}{16(1-\la+\la^2)}
		\,,\quad u_3 = -\sigma\,\frac{x_3}{8\la}\,.
\end{align*}
So we find
\begin{align*}
0 = \sum_{i=0}^3 x_iu_i
  = {} & \!-\!\frac{\sigma}{32}\left(\frac{4x_1^2}{1-\la}
		+ \frac{4\left(x_2^2+(1-2\la)x_0x_2\right)-3x_0^2}{1-\la+\la^2}
		+ \frac{4x_3^2}{\la}\right)\\
  = {} & \!-\!\frac{\sigma}{8}\left(\frac{x_1^2}{1-\la}
		+ \frac{\left(x_2+\left(\frac{1}{2}-\la\right)x_0\right)^2}
		{1-\la+\la^2} + \frac{x_3^2}{\la} - x_0^2\right),
\end{align*}
and therefore
\beq
  \F_\la = \big\{[x_0,x_1,x_2,x_3]\in\mathbb{P}_3(\RR)\;\big|\;
  \tilde{f}_\la(x_0,x_1,x_2,x_3)=0\big\}
\eeq
with
\beqn \label{tilde(f)}
  \tilde{f}_\la(x_0,x_1,x_2,x_3) = \frac{x_1^2}{1-\la} 
	+ \frac{\left(x_2+\left(\frac{1}{2}-\la\right)x_0\right)^2}
	{1-\la+\la^2} + \frac{x_3^2}{\la} - x_0^2\,.
\eeqn
Finally, we write the representation of $\F_\la$ in non homogeneous coordinates $x,y,z$ with $x_0=1$, $x_1=x$, $x_2=y$, $x_3=z$ as
\beq
 \Q = \left\{(x,y,z)\in\RR^3\;\big|\;f_\la(x,y,z)=0\right\},   
\eeq
where
\beq
  f_\la(x,y,z)
  = \frac{x^2}{1-\la}+\frac{\left(y-\la+\frac{1}{2}\right)^2}
	{1-\la+\la^2}+\frac{z^2}{\la}-1\,.\qedhere
\eeq
\end{proof}
\noindent Now we classify the quadrics $\Q$ with real parameter $\la$ in Euclidean space. We start with the case that $\la$ tends to $\pm\infty$. One finds
\beq
  \lim_{\la\rightarrow\pm\infty}f_\la(x,y,z) = 0\,.
\eeq
So we consider $\la\cdot f_\la$ instead of $f_\la$ and find
\begin{align*}
 & \lim_{\la\rightarrow\pm\infty}\la\cdot\frac{x^2}{1-\la}
	= \lim_{\la\rightarrow\pm\infty}\frac{x^2}{\frac{1}{\la}-1}
	= -x^2\,,\quad
 \lim_{\la\rightarrow\pm\infty}\la\cdot\frac{z^2}{\la} = z^2\,,\\[0.3cm]
 & \lim_{\la\rightarrow\pm\infty}\la\cdot
	\left(\frac{\left(y-\la+\frac{1}{2}\right)^2}{1-\la+\la^2}-1\right)
	= \lim_{\la\rightarrow\pm\infty}
	  \frac{\la\left(y^2+y-\frac{3}{4}\right)-2\la^2y}{1-\la+\la^2}\\
 &	\hspace{5.1cm} = \lim_{\la\rightarrow\pm\infty}
	  \frac{\frac{1}{\la}\left(y^2+y-\frac{3}{4}\right)-2y}
	  {\frac{1}{\la^2}-\frac{1}{\la}+1}
	= -2y\,; 
\end{align*}
hence
\beq
  \lim_{\la\rightarrow\pm\infty}\la\,f_\la(x,y,z) = -x^2+z^2-2y\,.
\eeq
In order to abbreviate notation we put $a^2:=|1-\la|$, $b^2:=1-\la+\la^2>0$ for every $\lambda\in\RR$, $c^2:=|\la|$. So we have the quadrics $\Q$ in the following table:
\begin{center}
\begin{tabular}{|@{\,}c@{\;\,}|c|p{4.1cm}|}\hline
\rule{0pt}{14pt}
$\la=-\infty$ & $x^2-z^2+2y=0$ & Hyperbolic paraboloid\\[3pt] \hline
\rule{0pt}{22.5pt}
$\RR\ni\la<0$ & $\dfrac{x^2}{a^2}+\dfrac{\left(y-\la+\frac{1}{2}\right)^2}{b^2}-\dfrac{z^2}{c^2}=1$ &
Hyperboloid of one sheet\\[10pt] \hline
\rule{0pt}{15pt}
$\la=0$ & $x^2+\left(y+\tfrac{1}{2}\right)^2 = 1 \,\wedge\, z=0$ & 
Circle $k_A$\\[4pt] \hline
\rule{0pt}{22.5pt}
$0<\la<1$ & $\dfrac{x^2}{a^2}+\dfrac{\left(y-\la+\frac{1}{2}\right)^2}{b^2}+\dfrac{z^2}{c^2}=1$ &
Ellipsoid\\[10pt] \hline
\rule{0pt}{15pt}
$\la=1$ & $\left(y-\tfrac{1}{2}\right)^2+z^2 = 1 \,\wedge\, x=0$ &
Circle $k_B$\\[4pt] \hline
\rule{0pt}{22.5pt}
$\RR\ni\la>1$ & $\dfrac{\left(y-\la+\frac{1}{2}\right)^2}{b^2}+\dfrac{z^2}{c^2}-\dfrac{x^2}{a^2}=1$ &
Hyperboloid of one sheet\\[10pt] \hline
\rule{0pt}{14pt}
$\la=\infty$ & $x^2-z^2+2y=0$ & Hyperbolic paraboloid\\[3pt] \hline
\end{tabular}
\end{center}

\noindent
A point of the circle $k_A$ is given by
\beq
  A = \big(\alpha_1(t),\alpha_2(t),\alpha_3(t)\big) 
\eeq
with
\beq
  \alpha_1(t)=\sin t\,,\quad \alpha_2(t)=-\frac{1}{2}-\cos t\,,\quad
  \alpha_3(t)=0\,.
\eeq 
There are two points $B_1$, $B_2$ of the circle $k_B$ which have common generating lines $AB_1$ and $AB_2$, respectively, with $A$:
\beq
  B_1 = \big(\beta_1(t),\beta_2(t),\beta_3(t)\big)\,,\quad
  B_2 = \big(\beta_1(t),\beta_2(t),-\beta_3(t)\big)\,, 
\eeq
where
\beq
  \beta_1(t) = 0\,,\quad 
  \beta_2(t) = \frac{1}{2}-\frac{\cos t}{1+\cos t}\,,\quad
  \beta_3(t) = \frac{\sqrt{1+2\cos t}}{1+\cos t}
\eeq 
(see \cite[pp.\ 106-107]{Dirnboeck_Stachel}). Hence, for fixed $t\in [-2\pi/3,2\pi/3]$, parametric functions of a line $AB_1$ are
\begin{align*}
  \om_i(m,t) := {} & (1-m)\alpha_i(t)+m\beta_i(t)\,,\quad m\in\RR\,,\quad
  i = 1,2,3\,.
\end{align*}
One finds
\beqn \label{omega_i}  
 \left.\begin{aligned}
  \om_1(m,t) = {} & (1-m)\sin t,\\[0.1cm]
  \om_2(m,t) = {} & \frac{2(m-1)\cos^2 t+(2m-3)\cos t+2m-1}
				{2(1+\cos t)}\,,\\
  \om_3(m,t) = {} &\frac{m\,\sqrt{1+2\cos t}}{1+\cos t}\,.
 \end{aligned}\quad\right\}
\eeqn
It follows that
\beq \label{PG} \hspace{-0.3cm}\left.
\begin{array}{l@{\;\,}*2{l@{\;=\;}r@{\,,\;}}l@{\;=\;}r}
1) & x &  \om_1(m,t) & y &  \om_2(m,t) & z &  \om_3(m,t)\,,\\[0.12cm]
2) & x &  \om_1(m,t) & y &  \om_2(m,t) & z & -\om_3(m,t)\,,\\[0.12cm]
\end{array}
\right\}\;
t\in\left[-\dfrac{2\pi}{3},\dfrac{2\pi}{3}\right],\; m\in\RR\,,
\eeq
are the parametric equations of all generating lines of $\Ol$, hence a parametrisation of $\Ol$. The restriction of the parameter $m$ to the interval $[0,1]$ yields the oloid in the narrow sense as the convex hull of $k_A$ and $k_B$.

In the following, we need the intervals
\beqn \label{I_i}
  I_1 := \left[-\frac{2\pi}{3},\,\frac{2\pi}{3}\right],\quad
  I_2 := \left(\frac{2\pi}{3},\,2\pi\right],
\eeqn
and the planes
\begin{align*}
  \X := {} & \{(x,y,z)\in\RR^3\,|\,x=0\}\,,\quad
  \Y := \{(x,y,z)\in\RR^3\,|\,y=0\}\,,\\
  \Z := {} & \{(x,y,z)\in\RR^3\,|\,z=0\}\,.
\end{align*}

\begin{corollary} \label{touching_curve}
For fixed value of $\la\in\RR$, a parametrization of the touching curve $\C$ between $\Ol$ and $\Q$ is given by 
\begin{align*}
\;\, & \gamma(\la,\cdot) : \;
  I_1\cup I_2 \rightarrow \RR^3\,,\quad
  t\; \mapsto \gamma(\la,t)
  =\left\{
  \begin{array}{l@{\quad\mbox{if}\quad}l}
	\gamma_1(\la,t) & t\in I_1\,,\\[0.2cm]
	\gamma_2(\la,t) & t\in I_2\,, 
  \end{array}\right.  
\end{align*}
with
\begin{align*}
  \gamma_1(\la,t) = {} & 
	\big(\ka_1(\la,t),\,\ka_2(\la,t),\,\ka_3(\la,t)\big),\\[0.1cm]
  \gamma_2(\la,t) = {} & \!
	\left(\ka_1\!\left(\la,\,\frac{4\pi}{3}-t\right)\!,\;
	\ka_2\!\left(\la,\,\frac{4\pi}{3}-t\right)\!,\;
	-\ka_3\!\left(\la,\,\frac{4\pi}{3}-t\right)\!\right),
\end{align*}
where
\begin{align*}
  \ka_1(\la,t)
  = {} & \frac{(1-\la)\sin t}{1+\la\cos t}\,,\quad
  \ka_2(\la,t)
  = \frac{2\la-1+(\la-2)\cos t}{2(1+\la\cos t)}\,,\\[0.1cm]
  \ka_3(\la,t)
  = {} & \frac{\la\,\sqrt{1+2\cos t}}{1+\la\cos t}\,.
\end{align*}
\end{corollary}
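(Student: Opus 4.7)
The plan is to exploit the fact that every generator of $\Ol$ is tangent to every quadric $\Q$ of the tangential system $\widehat{\F}_\la$ constructed in the theorem above, so that the touching point on each generator can be read off as the double root of a single quadratic equation in the line parameter. Concretely, for fixed $\la$ and $t\in I_1$, I substitute the parameterization $(x,y,z)=\bigl(\om_1(m,t),\om_2(m,t),\om_3(m,t)\bigr)$ of a generator from~(\ref{omega_i}) into $f_\la(x,y,z)=0$ and clear denominators. Because each $\om_i$ is affine in $m$, this yields a quadratic
\[
 A(\la,t)\,m^2 + B(\la,t)\,m + C(\la,t) = 0,
\]
and tangency forces a double root
\[
 m^\ast(\la,t) \;=\; -\frac{B(\la,t)}{2A(\la,t)}.
\]
The three components of the touching point along this generator are then $\ka_i(\la,t) = \om_i\bigl(m^\ast(\la,t),t\bigr)$. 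Direct simplification should reveal $m^\ast(\la,t) = \la(1+\cos t)/(1+\la\cos t)$, from which substitution into $\om_1$ and $\om_3$ produces $\ka_1$ and $\ka_3$ immediately, while $\ka_2$ follows after cancelling the common factor $1+\cos t$ in $\om_2$.

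For $t\in I_2=(2\pi/3,2\pi]$ the touching curve must cross to the second family of generators (case~2) in~(\ref{PG}), where $\om_3$ is replaced by $-\om_3$. The same double-root computation applies verbatim to that family and gives the reflected point $(\ka_1,\ka_2,-\ka_3)$, so the natural parameter on this branch still ranges over $I_1$. To produce a single continuous parameterization running once around the closed curve $\C$, I reindex the second branch by the affine substitution $t\mapsto \tfrac{4\pi}{3}-t$, which bijects $I_2$ onto $[-2\pi/3,2\pi/3)$ with reversed orientation; this is exactly the formula for $\gamma_2$. Continuity at the junction $t=2\pi/3$ is automatic since $1+2\cos t$ vanishes there and hence $\ka_3(\la,2\pi/3)=0$, so the two pieces meet in the plane $\Z$; the curve closes at $t=2\pi$ onto $\gamma_1(\la,-2\pi/3)$ for the same reason.

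The main obstacle is not conceptual but computational: one must juggle the denominators $1-\la$, $1-\la+\la^2$, $\la$ from $f_\la$ together with the factor $1+\cos t$ from $\om_2,\om_3$, then verify that after substituting $m^\ast$ the spurious factors of $1+\cos t$ and $1-\la+\la^2$ cancel against the numerator, leaving the clean common denominator $1+\la\cos t$ claimed for all three $\ka_i$. A useful sanity check along the way is that the discriminant $B^2-4AC$ vanishes identically in $t$; this is guaranteed by the tangential-system construction of $\widehat{\F}_\la$, but an explicit verification also confirms that the quadrics $\Q$ are genuinely inscribed in $\Ol$, and not merely secant, along each generator. The degenerate parameter values $\la\in\{0,1\}$, where $\Q$ collapses to one of the circles $k_A$, $k_B$, are handled as limiting cases in which the touching curve degenerates onto that circle, consistent with the classification table preceding the corollary.
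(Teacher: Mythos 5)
Your proposal is correct and follows essentially the same route as the paper: substitute the generator parametrization \eqref{omega_i} into $f_\la=0$, identify the double root $\widetilde{m}=\la(1+\cos t)/(1+\la\cos t)$ of the resulting quadratic in $m$, and obtain $\gamma_2$ on $I_2$ from the symmetry in $\Z$ via the reindexing $t\mapsto\tfrac{4\pi}{3}-t$. The additional remarks on the vanishing discriminant and the degenerate cases $\la\in\{0,1\}$ are consistent with, though not required by, the paper's argument.
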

\noindent
\begin{proof}
For fixed value of $t$ the generating line
\beq
  \Lt := \left\{(x,y,z)\in\RR^3\;\big|\;
  x=\om_1(m,t),\, y=\om_2(m,t),\, z=\om_3(m,t);\, m\in\RR\right\}
\eeq
of $\Ol$ is tangent to $\Q$ for one value $\widetilde{m}$ of $m$. As double solution of the equation
\beq
  f_\la(\om_1(m,t),\om_2(m,t),\om_3(m,t)) = 0
\eeq
one finds
\beq
  \widetilde{m} = \psi(\la,t) := \frac{\la(1+\cos t)}{1+\la\cos t}\,.
\eeq
It follows that
\begin{align*}
  \om_1(\psi(\la,t),t) = {} & \frac{(1-\la)\sin t}{1+\la\cos t}\,,\quad
  \om_2(\psi(\la,t),t) = \frac{2\la-1+(\la-2)\cos t}{2(1+\la\cos t)}\,,
  \displaybreak[0]\\[0.1cm]
  \om_3(\psi(\la,t),t) = {} & \frac{\la\,\sqrt{1+2\cos t}}{1+\la\cos t}\,.
\end{align*}
We put $\ka_j(\la,t):=\om_j(\psi(\la,t),t)$, $j=1,2,3$. This yields
\beq
  \gamma_1(\la,t):=\big(\ka_1(\la,t),\,\ka_2(\la,t),\,\ka_3(\la,t)\big)
\eeq
as contact point of $\Lt$ and $\Q$ for all lines $\Lt$ with $t\in I_1$. Due to the symmetry of $\Ol$ with respect to the plane $\Z$, we have
\beq
  \gamma_2(\la,t) :=
  \left(\ka_1\!\left(\la,\,\tfrac{4\pi}{3}-t\right)\!,\,
        \ka_2\!\left(\la,\,\tfrac{4\pi}{3}-t\right)\!,\,
      - \ka_3\!\left(\la,\,\tfrac{4\pi}{3}-t\right)\right)
\eeq
if $t\in I_2$. Obviously,
\beq
  \gamma_2(\la,2\pi/3)=\gamma_1(\la,2\pi/3)\quad\mbox{and}\quad 
  \gamma_2(\la,2\pi)=\gamma_1(\la,-2\pi/3)
\eeq 
for every $\la\in\RR$.
\end{proof}
\noindent
Examples with inscribed quadric and touching curves are shown in Fig.\ \ref{B02} and Fig.\ \ref{B03}.
\begin{remark}
{\em In the special case $\la=1/2$ one gets the equations
\beq
  x = \frac{\sin t}{2+\cos t}\,,\quad
  y = -\frac{3\cos t}{2(2+\cos t)}\,,\quad
  z = \pm\frac{\sqrt{1+2\cos t}}{2+\cos t}
\eeq
of the inscribed ellipsoid in \cite[p.\ 115, Eq.\ (27)]{Dirnboeck_Stachel}.}
\end{remark}

\section{Properties of the touching curves $\C$}

Since $f_\la(-x,y,z)$ $=f_\la(x,y,z)$ and $f_\la(x,y,-z)=f_\la(x,y,z)$, every quadric $\Q$ is symmetric with respect to $\X$ and $\Z$. The extended oloid $\Ol$ is symmetric with respect to these planes, too. It follows that all touching curves $\C$ are symmetric with respect to $\X$ and $\Z$. We denote by $X_1$, $X_2$ the intersection points of $\C$ and $\X$, and by $Z_1$, $Z_2$ those of $\C$ and $\Z$, and find
\beqn \label{X_iZ_i}
\left.
\begin{aligned}
 X_1 = X_1(\la)
  = {} & \g(\la,0)
  = \left(0,\,-\frac{3(1-\la)}{2(1+\la)},\,
		\frac{\sqrt{3}\,\la}{1+\la}\right),\\[0.1cm]
 Z_1 = Z_1(\la)
  = {} & \g\left(\la,\,\frac{2\pi}{3}\right)
  = \left(\frac{\sqrt{3}\,(1-\la)}{2-\la},\,
		\frac{3\la}{2(2-\la)},\,0\right),\\[0.1cm]
 X_2 = X_2(\la) 
  = {} & \g\left(\la,\,\frac{4\pi}{3}\right)
  = \left(0,\,-\frac{3(1-\la)}{2(1+\la)},\,-
		\frac{\sqrt{3}\,\la}{1+\la}\right),\\[0.1cm]
 Z_2 = Z_2(\la)
  = {} & \g(\la,2\pi)
  = \left(-\frac{\sqrt{3}\,(1-\la)}{2-\la},\,
		\frac{3\la}{2(2-\la)},\,0\right).
\end{aligned}
\;\;\right\}
\eeqn
One easily finds the parametrization $\T$ for the tangent of $\C$ in the point $\g(\la,t)$:
\beqn \label{T_la}
  \T(t) = \left\{\!\!
  \begin{array}{ll}
	\big\{(x,y,z)\in\RR^3\:\big|\:x=\tau_1(\la,t,\mu),\,y=\tau_2(\la,t,\mu),\\[0.12cm]
	\hspace{0.3cm}z=\tau_3(\la,t,\mu)\,;\,\mu\in\RR\big\}\hspace{1.67cm}\mbox{if}\quad t\in I_1\,,\\[0.35cm]
	\left\{(x,y,z)\in\RR^3\:\big|\:
	 x = \tau_1\!\left(\la,\,\tfrac{4\pi}{3}\!-\!t,\,\mu\right)\!,\,
	 y = \tau_2\!\left(\la,\,\tfrac{4\pi}{3}\!-\!t,\,\mu\right)\!,\right.\\[0.12cm]
	 \hspace{0.3cm}
	 z = \left.-\tau_3\!\left(\la,\,\tfrac{4\pi}{3}\!-\!t,\,\mu\right)\!;\,\mu\in\RR\right\}
	 \quad\mbox{if}\quad t\in I_2\,,	
  \end{array}\hspace{-0.15cm}\right\}
\eeqn
with
\beq
  \tau_j(\la,t,\mu) = \ka_j(\la,t)+\mu\,\dot{\ka}_j(\la,t)\,,\quad
  \dot{\ka}_j = \frac{\dd\ka_j(\la,t)}{\dd t}\,,\quad j=1,2,3\,,
\eeq
where
\begin{align*}
  \dot{\ka}_1(\la,t)
  = {} & \frac{(1-\la)(\la+\cos t)}{(1+\la\cos t)^2}\,,\quad
  \dot{\ka}_2(\la,t)
  = \frac{(1-\la+\la^2)\sin t}{(1+\la\cos t)^2}\,,\\[0.1cm]
  \dot{\ka}_3(\la,t)
  = {} & \frac{\la[\la(1+\cos t)-1]\sin t}{(1+\la\cos t)^2\,\sqrt{1+2\cos t}}\,.  
\end{align*}

\begin{figure}[H]
\begin{center}
  \includegraphics[scale=0.37]{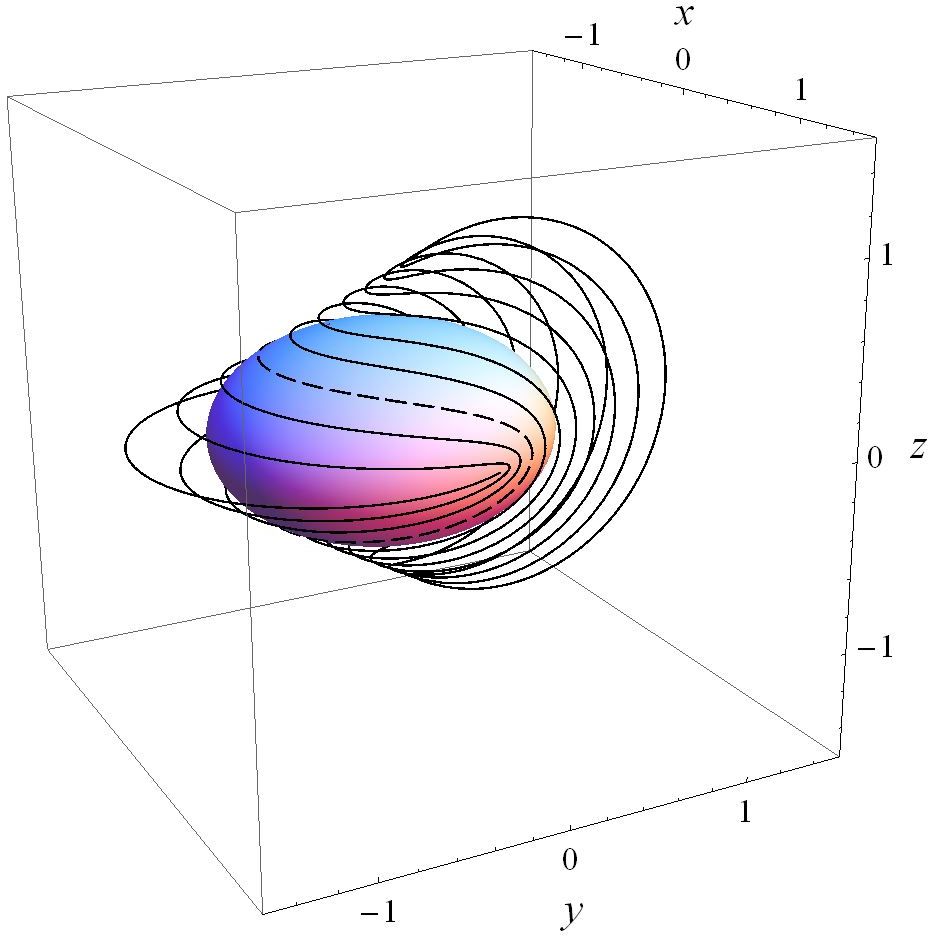}
\end{center}
\vspace{-0.5cm}
\caption{\label{B02} Touching curves $\C$, $\la=0,\,0.1,\,0.2,\ldots,\,0.9,\,1$, and ellipsoid $\mathcal{Q}_{0.3}$ in the box $-1.5\leq x,y,z\leq 1.5$; $\mathcal{C}_{0.3}$ with dashed line}
\end{figure}
\begin{figure}[h]
\begin{minipage}[c]{0.5\textwidth}
\includegraphics[width=\textwidth]{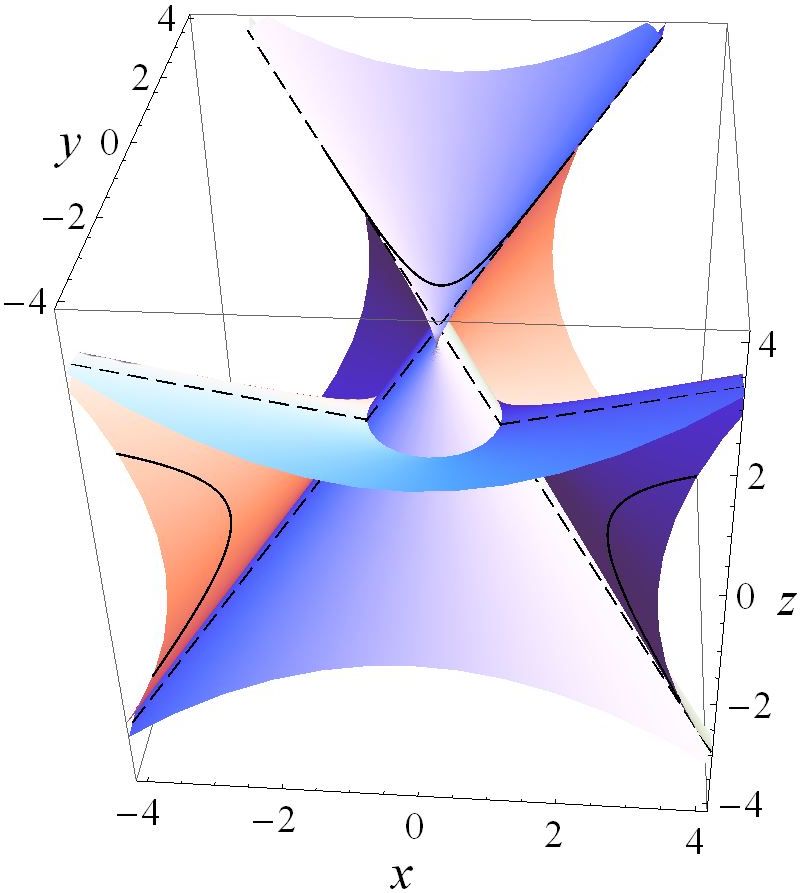}
\end{minipage}
\begin{minipage}[c]{0.5\textwidth}
\includegraphics[width=\textwidth]{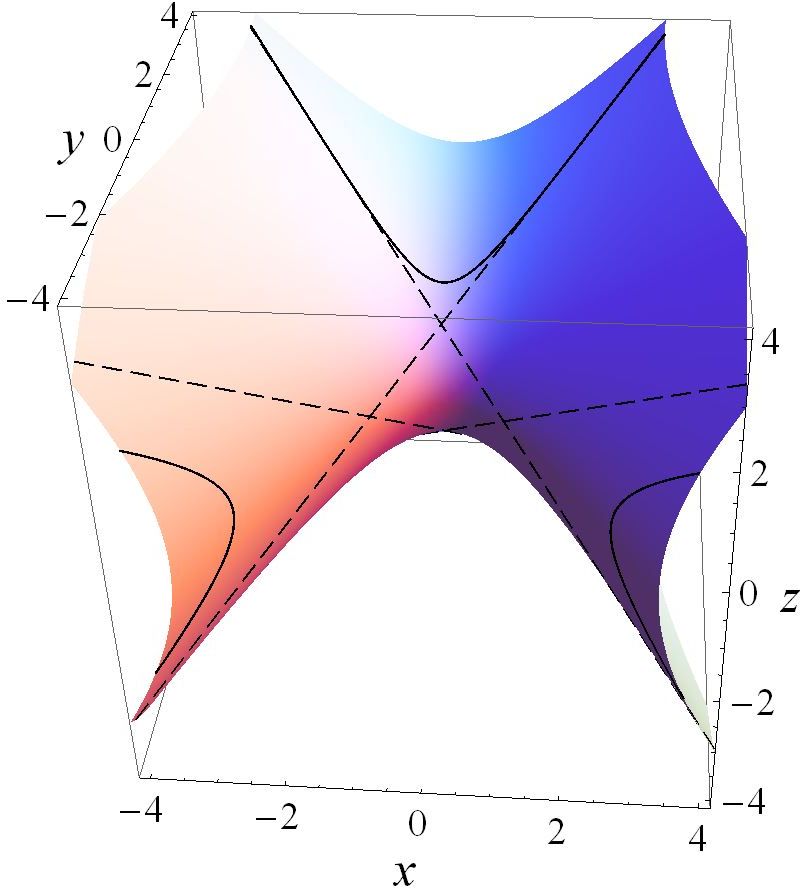}
\end{minipage}
\caption{\label{B03} Extended oloid $\Ol$ (left) and hyperboloid $\mathcal{Q}_4$ (right) with touching curve~ $\mathcal{C}_4$ (solid lines) and common generating lines $\G_j(4)$, $j=1,2,3,4$, (dashed) in the box $-4\leq x,y,z\leq 4$}
\end{figure}

\clearpage

We consider the function $1+\la\cos t$ in the denominator of $\ka_1,\ka_2,\ka_3$. It vanishes in the interval $I_1$ for $t=\pm\arccos(-1/\la)$ if $\la\in\RR\setminus(-1,2)$. (For $\la=-1$, we have $t=0$; and for $\la=2$, $t=\pm 2\pi/3$.) $1+\la\cos t$ has no zeros in $I_1$ if $\la\in(-1,2)$. Therefore, $\ka_1,\ka_2,\ka_3$ are continuous functions if $\la\in(-1,2)$; they are not continous if $\la\in\RR\setminus(-1,2)$. So we have to distinguish the following cases:\\[0.2cm]
\underline{Case 1: $-1<\la<2$}
Since $\ka_1,\ka_2,\ka_3$ are continuous functions, and
\beq
  \g_2(\la,2\pi/3) = Z_1(\la) = \g_1(\la,2\pi/3),\quad
  \g_2(\la,2\pi)   = Z_2(\la) = \g_1(\la,-2\pi/3),
\eeq
the curve $\C$ is closed (see Fig.\ \ref{B02}). As an example with $\la\notin[0,1]$, Fig.\ \ref{C-0.87xyz} shows two projections of $\mathcal{C}_{-0.87}$. The projection of $\mathcal{C}_{-0.87}$ onto the plane $\X$ (thick line in the diagram on the left of Fig.\ \ref{C-0.87xyz}) is part of the left branch of the hyperbola with center point $y=29831/49938\approx 0.597$, $z=0$.\\[0.2cm]
\underline{Case 2: $\la\in\RR\setminus[-1,2]$} The parametrization $\g(\la,t)$ of $\C$ has poles for
\begin{align*}
  t_1 = {} & \!-\!\arccos\left(\!-\frac{1}{\la}\right),\hspace{0.85cm} 
  t_2 = \arccos\left(\!-\frac{1}{\la}\right),\\
  t_3 = {} & \frac{4\pi}{3}-\arccos\left(\!-\frac{1}{\la}\right),\quad
  t_4 = \frac{4\pi}{3}+\arccos\left(\!-\frac{1}{\la}\right).
\end{align*}
Therefore, it consists of four branches. We calculate the asymptotes of $\C$ in the poles. For $t\in I_1$ the tangent $\T(t)$ intersects $\X$ in the point with the coordinates
\begin{align*}
  y = {} & \ka_{2}(\la,t)-\frac{\dot{\ka}_2(\la,t)}{\dot{\ka}_1(\la,t)}\,\ka_1(\la,t)
    = \frac{\la-2+(2\la-1)\cos t}{2(\la+\cos t)}\,,\displaybreak[0]\\[0.1cm]
  z = {} & \ka_{3}(\la,t)-\frac{\dot{\ka}_3(\la,t)}{\dot{\ka}_1(\la,t)}\,\ka_1(\la,t)
    = \frac{\la(1+\cos t+\cos^2 t)}{(\la+\cos t)\,\sqrt{1+2\cos t}}\,,		
\end{align*}
and $\Z$ in
\begin{align*}
  x = {} & \ka_{1}(\la,t)-\frac{\dot{\ka}_1(\la,t)}{\dot{\ka}_3(\la,t)}\,\ka_3(\la,t)
    = \frac{(\la-1)(1+\cos t+\cos^2 t)}{[\la(1+\cos t)-1]\sin t}\,,\\[0.1cm]
  y = {} & \ka_{2}(\la,t)-\frac{\dot{\ka}_2(\la,t)}{\dot{\ka}_3(\la,t)}\,\ka_3(\la,t)
    = -\frac{1+\la+(2-\la)\cos t}{2[\la(1+\cos t)-1]}\,.		
\end{align*}
For $t=t_1$ one finds that the asymptote $\T(t_1)$ intersects $\Z$ in the point
\beq
  (x_1,y_1,z_1) =
  \left(
	\frac{1-\la+\la^2}{2+\la-\la^2}\,\sqrt{1-\frac{1}{\la^2}}\,,\;
	\frac{2-2\la-\la^2}{2\la(\la-2)}\,,\;
	0
  \right),
\eeq
and $\X$ in
\beq
  (\tilde{x}_1,\tilde{y}_1,\tilde{z}_1) =
  \left(
	0\,,\;
	\frac{1-4\la+\la^2}{2(\la^2-1)}\,,\;
	\frac{1-\la+\la^2}{\la^2-1}\,\sqrt{\frac{\la}{\la-2}}\,\right). 
\eeq
Hence, a parametrization $\A_1(\la)$ of the asymptote $\T(t_1)$ is
\beq
  \A_1(\la) = \{(x,y,z)\in\RR^3\:\big|\:x=\tta_1(\la,\nu),\,
	y=\tta_2(\la,\nu),\,z=\tta_3(\la,\nu);\,\nu\in\RR\}
\eeq
with
\begin{align*}
  \tta_1(\la,\nu) := x_1+\nu\,(\tilde{x}_1-x_1)
  = {} & \frac{1-\la+\la^2}{2+\la-\la^2}\,\sqrt{1-\frac{1}{\la^2}}\;\,(1-\nu)\,,\\[0.1cm]
  \tta_2(\la,\nu) := \,y_1+\nu\,(\tilde{y}_1-y_1)
  = {} & \frac{2-2\la-\la^2}{2\la(\la-2)}
		+\nu\,\frac{(1-\la+\la^2)^2}{\la(\la-2)(\la^2-1)}\,,\\
  \tta_3(\la,\nu) := \,z_1+\nu\,(\tilde{z}_1-z_1)
  = {} & \nu\,\frac{1-\la+\la^2}{\la^2-1}\,\sqrt{\frac{\la}{\la-2}}\,.
\end{align*}
In order to abbreviate notation we write
\beq
  \A_1(\la) = \{(\tta_1(\la,\nu),\,\tta_2(\la,\nu),\,
	\tta_3(\la,\nu))\:|\:\nu\in\RR\}\,.
\eeq
The remaining asymptotes are
\begin{align*}
  t=t_2\!:\quad \A_2(\la)
  = {} & \{(-\tta_1(\la,\nu),\,\tta_2(\la,\nu),\,\tta_3(\la,\nu))\:|\:\nu\in\RR\}\,,\\[0.1cm]
  t=t_3\!:\quad \A_3(\la)
  = {} & \{(-\tta_1(\la,\nu),\,\tta_2(\la,\nu),\,-\tta_3(\la,\nu))\:|\:\nu\in\RR\}\,,\\[0.1cm]
  t=t_4\!:\quad \A_4(\la)
  = {} & \{(\tta_1(\la,\nu),\,\tta_2(\la,\nu),\,-\tta_3(\la,\nu))\:|\:\nu\in\RR\}\,.  
\end{align*}
As an example, Fig.\ \ref{C-1.4xyz} shows two projections of the touching curve $\mathcal{C}_{-1.4}$. The projection onto the plane $\X$ (thick line) is part of a hyperbola with center point $y=71/238\approx 0.298$, $z=0$.\\[0.3cm]
\underline{Case 3: $\la=\pm\infty$}
For $\g^*(t):=\lim_{\la\rightarrow\pm\infty}\g(\la,t)$ one easily finds
\begin{align*}
\;\, & \g^* : \;
  I_1\cup I_2 \rightarrow \RR^3\,,\quad
  t\; \mapsto \g^*(t)
  =\left\{
  \begin{array}{l@{\quad\mbox{if}\quad}l}
	\g_1^*(t) & t\in I_1\,,\\[0.2cm]
	\g_2^*(t) & t\in I_2\,, 
  \end{array}\right.  
\end{align*}
with
\begin{align*}
  \g_1^*(t) = {} & 
	\big(\ka_1^*(t),\,\ka_2^*(t),\,\ka_3^*(t)\big),\\[0.1cm]
  \g_2^*(t) = {} & \!
	\left(\ka_1^*\!\left(\frac{4\pi}{3}-t\right)\!,\;
	\ka_2^*\!\left(\frac{4\pi}{3}-t\right)\!,\;
	-\ka_3^*\!\left(\frac{4\pi}{3}-t\right)\!\right),
\end{align*}
where
\begin{align*}
  \ka_1^*(t)
	= {} & \lim_{\la\rightarrow\pm\infty}\ka_1(\la,t) 
	= -\tan t\,,\\[0.1cm]
  \ka_2^*(t)
	= {} & \lim_{\la\rightarrow\pm\infty}\ka_2(\la,t)
	= \frac{1}{2}+\frac{1}{\cos t}\,,\\[0.1cm]
  \ka_3^*(t)
	= {} & \lim_{\la\rightarrow\pm\infty}\ka_3(\la,t) 
	= \frac{\sqrt{1+2\cos t}}{\cos t}\,.
\end{align*}
The parametrization $\g^*(t)$ of $\mathcal{C}_{\pm\infty}$ has poles for
\beq
  t_1 = -\frac{\pi}{2}\,,\quad t_2 = \frac{\pi}{2}\,,\quad
  t_3 = \frac{4\pi}{3}-\frac{\pi}{2} = \frac{5\pi}{6}\,,\quad
  t_4 = \frac{4\pi}{3}+\frac{\pi}{2} = \frac{11\pi}{6}\,,  
\eeq
and therefore, $\mathcal{C}_{\pm\infty}$ consists of four branches (see Fig.~\ref{C_infxyz}). For the asymptotes of $\mathcal{C}_{\pm\infty}$ we find 
\beqn \label{A(inf)}
\left.\begin{aligned}
  \lim_{\la\rightarrow\pm\infty}\A_1(\la)
  = {} & \left\{\left(\nu-1,\,\nu-1/2,\,\nu\right)\:|\:\nu\in\RR
		\right\},\\[0.1cm]
   \lim_{\la\rightarrow\pm\infty}\A_2(\la)
  = {} & \left\{\left(1-\nu,\,\nu-1/2,\,\nu\right)\:|\:\nu\in\RR
		\right\},\\[0.1cm]
 \lim_{\la\rightarrow\pm\infty}\A_3(\la)
  = {} & \left\{\left(1-\nu,\,\nu-1/2,\,-\nu\right)\:|\:\nu\in\RR
		\right\},\\[0.1cm] 
 \lim_{\la\rightarrow\pm\infty}\A_4(\la)
  = {} & \left\{\left(\nu-1,\,\nu-1/2,\,-\nu\right)\:|\:\nu\in\RR
		\right\},
\end{aligned}\;\;\right\}
\eeqn
and from \eqref{X_iZ_i} for the intersection points,
\begin{align*}
  \lim_{\la\rightarrow\pm\infty}X_1(\la)
  = {} & \left(0\,,\:\frac{3}{2}\,,\:\sqrt{3}\right),\hspace{0.65cm}
  \lim_{\la\rightarrow\pm\infty}Z_1(\la)
  = \left(\sqrt{3}\,,\:-\frac{3}{2}\,,\:0\right),\\[0.1cm]
  \lim_{\la\rightarrow\pm\infty}X_2(\la)
  = {} & \left(0\,,\:\frac{3}{2}\,,\:-\sqrt{3}\right),\quad
  \lim_{\la\rightarrow\pm\infty}Z_2(\la)
  = \left(-\sqrt{3}\,,\:-\frac{3}{2}\,,\:0\right).
\end{align*}
From the parametrization $\g^*(t)$ of $\mathcal{C}_{\pm\infty}$ we have
\beq
  x^2 = \tan^2 t = \frac{1-\cos^2 t}{\cos^2 t}\,,\quad
  y = \frac{1}{2}+\frac{1}{\cos t}\,,\quad
  z^2 = \frac{1+2\cos t}{\cos^2 t}\,.
\eeq
By eliminating $\cos t$ we find the following algebraic equations of the projections of $\mathcal{C}_{\pm\infty}$ onto the planes $\X$, $\Y$, $\Z$:
\begin{eqnarray}
\mbox{projection onto $\X$:} & & \left(y+\tfrac{1}{2}\right)^2-z^2=1\,,
	\label{X-hyperbola}\\[0.1cm] 
\mbox{$\Y$:} & & \left(x^2-z^2\right)^2-2\left(x^2+z^2\right)=3\,,
	\\[0.12cm]
\mbox{$\Z$:} & & \left(y-\tfrac{1}{2}\right)^2-x^2=1\,.
	\label{Z-hyperbola}
\end{eqnarray}
Formulas \eqref{X-hyperbola} and $\eqref{Z-hyperbola}$ are the equations of hyperbolas with center points $y=-1/2$, $z=0$, and $x=0$, $y=1/2$, respectively. The actual projections of $\mathcal{C}_{\pm\infty}$ onto $\X$ and $\Z$ (plotted with thick lines) are part of the respective hyperbola.\\[0.3cm]
\underline{Case 4: $\la\in\{-1,2\}$} $\C$ consists of two branches.  For $\la=-1$, from \eqref{X_iZ_i} it follows that
\begin{align*}
& \lim_{\ve\rightarrow 0}X_1(-1-\ve) = (0,\infty,\infty)\,,\hspace{0.7cm}
  \lim_{\ve\rightarrow 0}X_1(-1+\ve) = (0,-\infty,-\infty)\,,\\[0.1cm]
& \lim_{\ve\rightarrow 0}X_2(-1-\ve) =  (0,\infty,-\infty)\,,\quad
  \lim_{\ve\rightarrow 0}X_2(-1+\ve) =  (0,-\infty,\infty)\,,\\[0.1cm]
& Z_1(-1) = \left(\frac{2}{\sqrt{3}}\,,\:-\frac{1}{2}\,,\:0\right),
  \hspace{1.1cm} 
  Z_2(-1) = \left(\!-\frac{2}{\sqrt{3}}\,,\:-\frac{1}{2}\,,\:0\right),  
\end{align*}
 
\begin{figure}[h]
\begin{minipage}[c]{0.5\textwidth}
\includegraphics[scale=0.74]{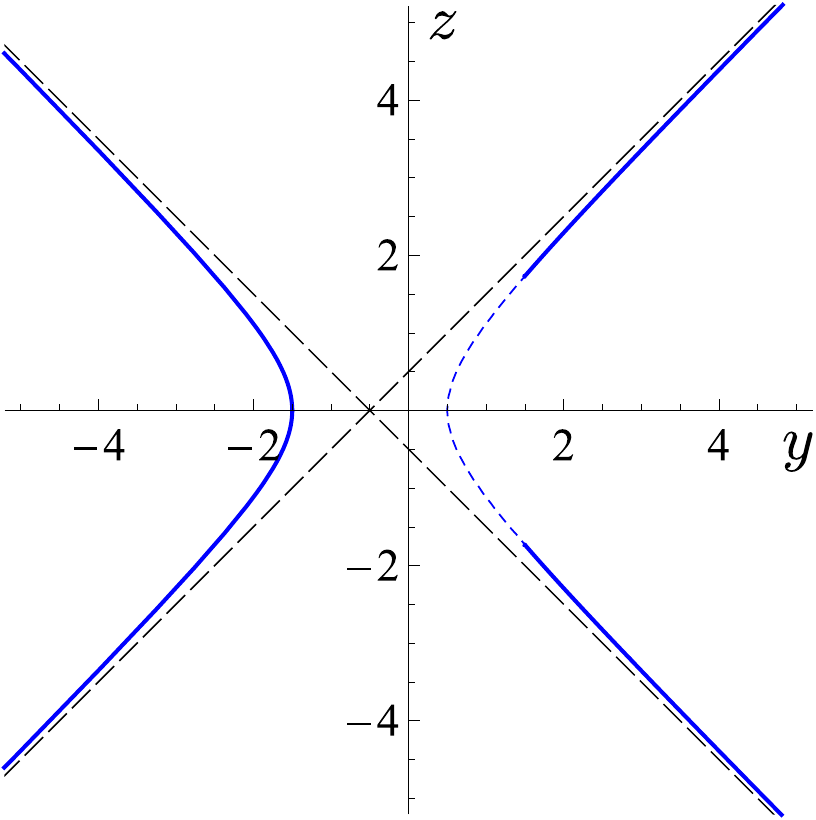}
\vspace{0.25cm}
\end{minipage}
\hspace{0.1cm}
\begin{minipage}[c]{0.5\textwidth}
\includegraphics[scale=0.74]{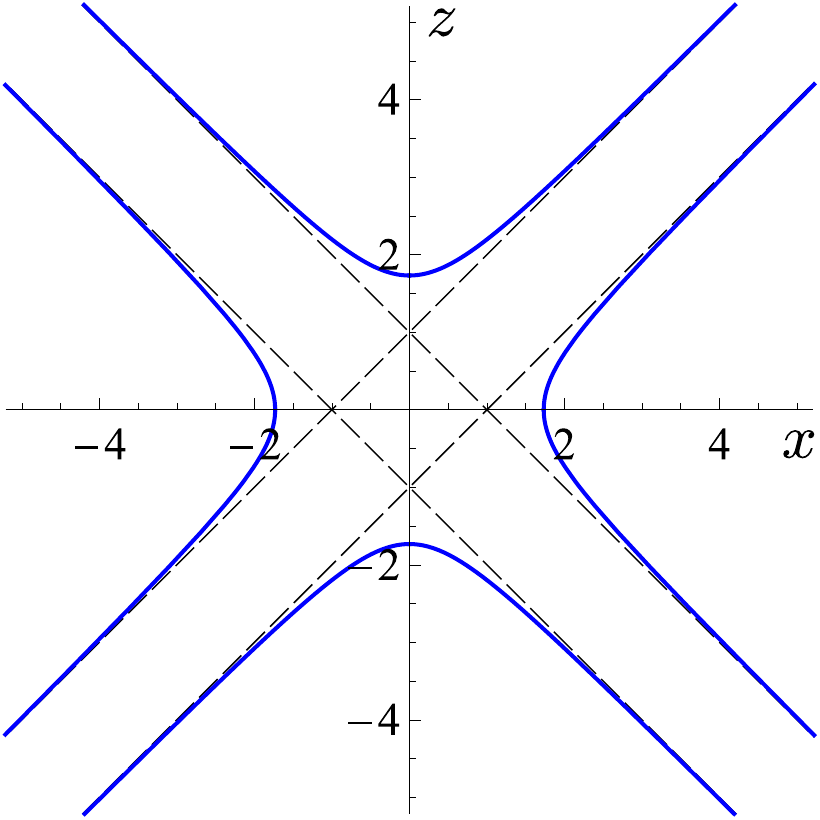}
\vspace{0.25cm}
\end{minipage}
\begin{minipage}[c]{0.5\textwidth}
\hspace{0.03cm}\includegraphics[scale=0.74]{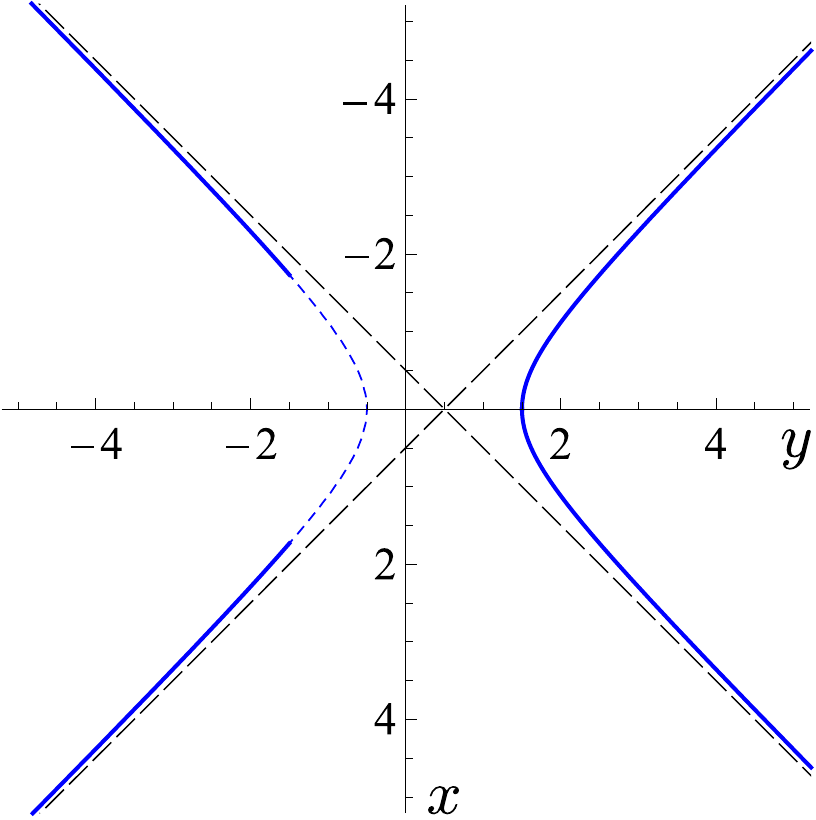}
\end{minipage}
\hspace{0.1cm}
\begin{minipage}[c]{0.5\textwidth}
\caption{\label{C_infxyz} Projections of $\mathcal{C}_{\pm\infty}$ and its asymptotes $\A_k(\pm\infty)$, $k=1,2,3,4$,$\quad$ onto the planes $\X$, $\Y$, and $\Z$;} $-5\leq x,y,z\leq 5$
\end{minipage}
\vspace{0.6cm}
\end{figure}

\begin{figure}[h]
\begin{minipage}[c]{0.5\textwidth}
\includegraphics[scale=0.74]{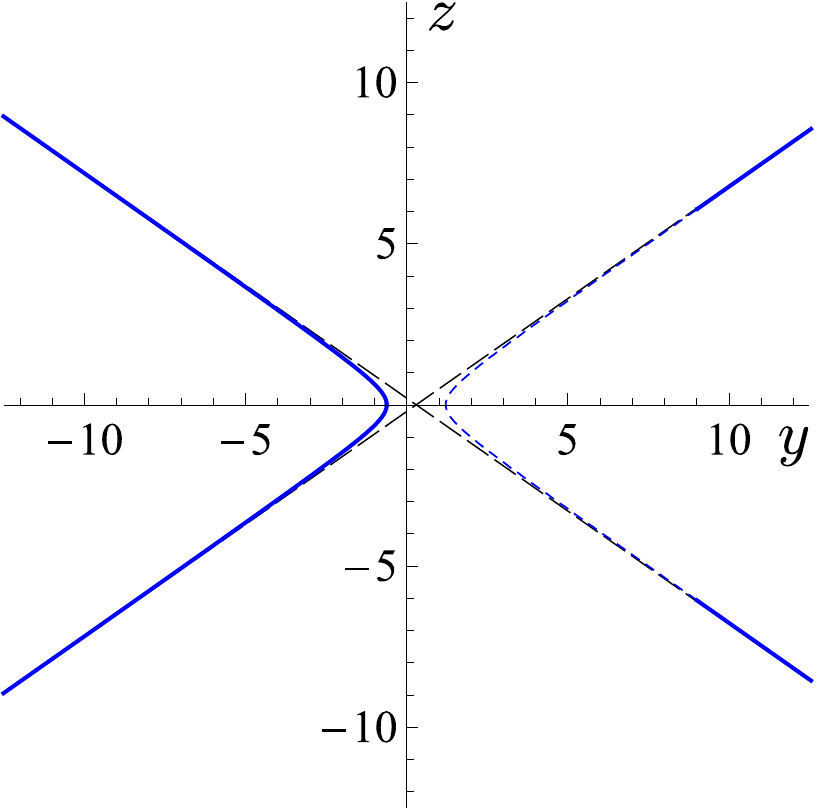}
\end{minipage}
\hspace{0.1cm}
\begin{minipage}[c]{0.5\textwidth}
\includegraphics[scale=0.74]{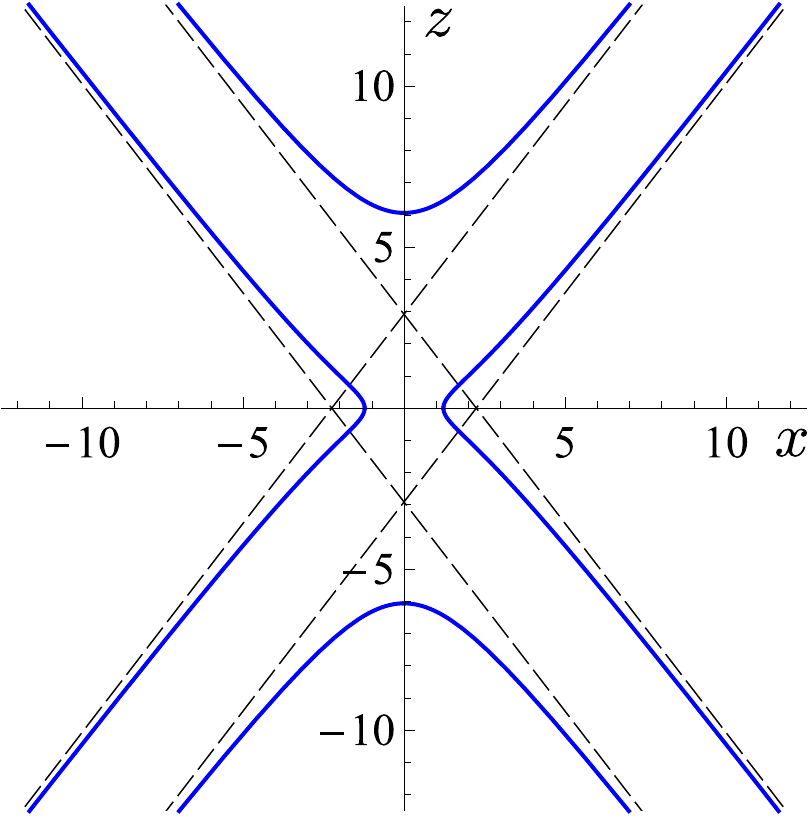}
\end{minipage}
\caption{\label{C-1.4xyz} Projections of $\mathcal{C}_{-1.4}$ and its asymptotes $\A_k(4)$, $k=1,2,3,4$, onto the planes $\X$ and $\Y$; $-12\leq x,y,z\leq 12$}
\end{figure}

\begin{figure}[h]
\begin{minipage}[c]{0.5\textwidth}
\includegraphics[scale=0.74]{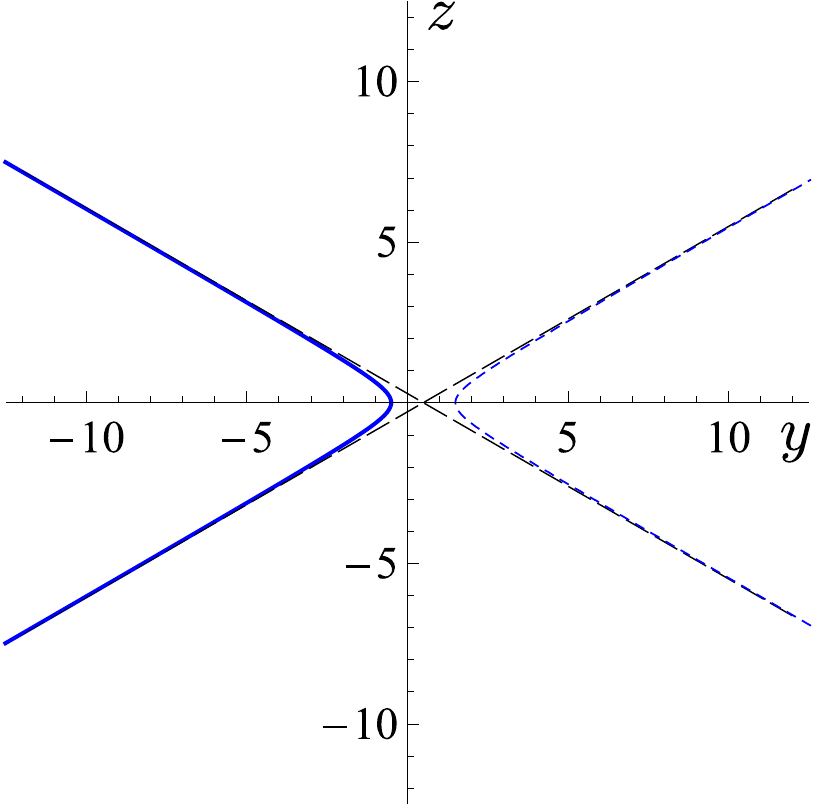}
\vspace{0.3cm}
\end{minipage}
\hspace{0.1cm}
\begin{minipage}[c]{0.5\textwidth}
\includegraphics[scale=0.74]{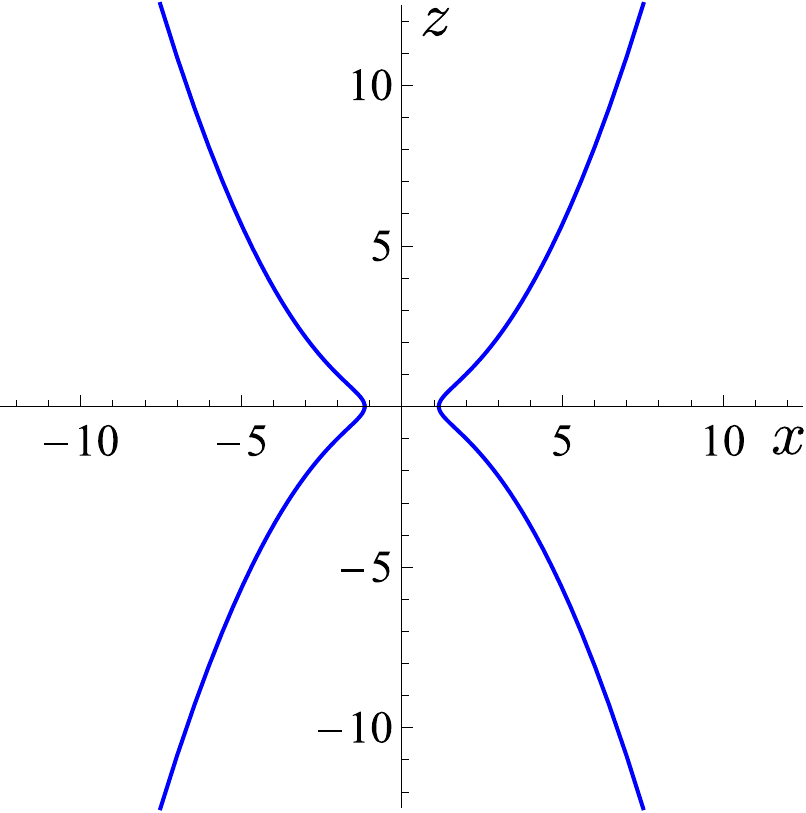}
\vspace{0.3cm}
\end{minipage}
\begin{minipage}[c]{0.5\textwidth}
\includegraphics[scale=0.74]{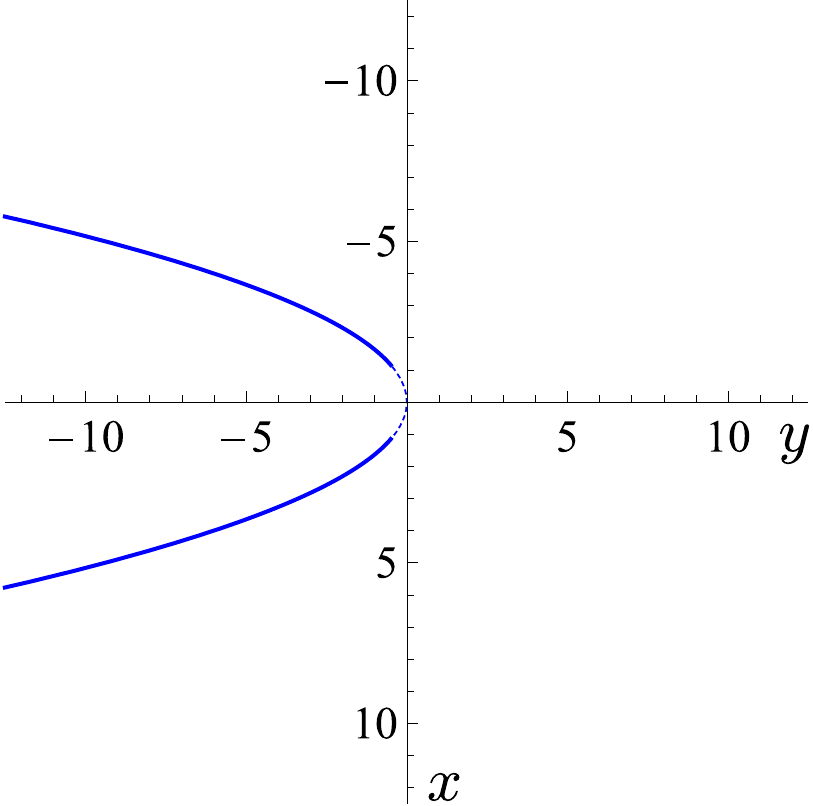}
\end{minipage}
\hspace{0.1cm}
\begin{minipage}[c]{0.5\textwidth}
\caption{\label{C-1xyz} Projections of $\mathcal{C}_{-1}$ onto the planes $\X$, $\Y$, and $\Z$;} $-12\leq x,y,z\leq 12$
\end{minipage}
\vspace{0.6cm}
\end{figure}

\begin{figure}[h]
\begin{minipage}[c]{0.5\textwidth}
\includegraphics[scale=0.74]{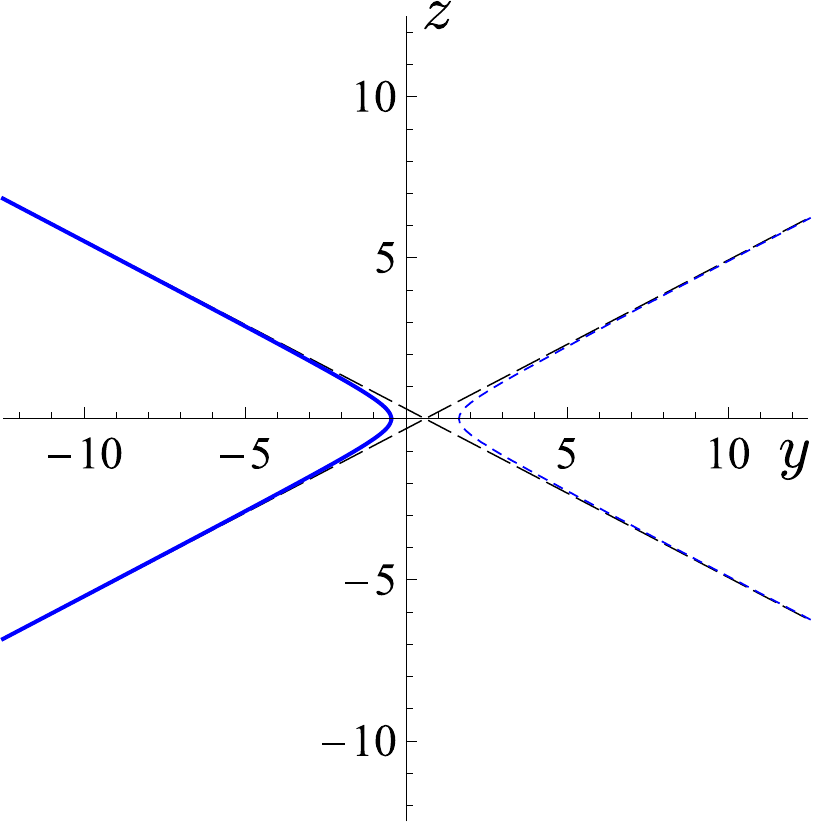}
\end{minipage}
\hspace{0.1cm}
\begin{minipage}[c]{0.5\textwidth}
\includegraphics[scale=0.74]{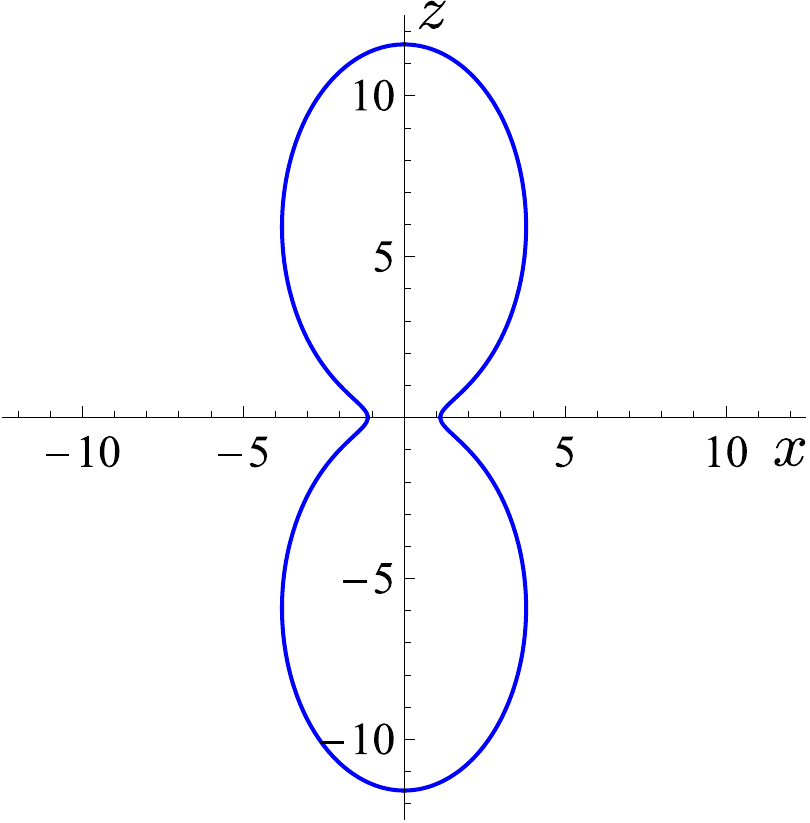}
\end{minipage}
\caption{\label{C-0.87xyz} Projections of $\mathcal{C}_{-0.87}$ onto the planes $\X$ and $\Y$; $-12\leq x,y,z\leq 12$}$\qquad\qquad\qquad\qquad\qquad$
\end{figure}

\clearpage

\noindent
and for $\la=2$,
\begin{align*}
& X_1(2) = \left(0\,,\:\frac{1}{2}\,,\:\frac{2}{\sqrt{3}}\right),
  \hspace{1.3cm}
  X_2(2) = \left(0\,,\:\frac{1}{2}\,,\:-\frac{2}{\sqrt{3}}\right),\displaybreak[0]\\[0.1cm]
& \lim_{\ve\rightarrow 0}Z_1(2-\ve) = (-\infty,\infty,0)\,,\quad
  \lim_{\ve\rightarrow 0}Z_1(2+\ve) = (\infty,-\infty,0)\,,\\
& \lim_{\ve\rightarrow 0}Z_2(2-\ve) = (\infty,\infty,0)\,,\hspace{0.67cm}
  \lim_{\ve\rightarrow 0}Z_2(2+\ve) = (-\infty,-\infty,0)\,.
\end{align*}
Projections of $\mathcal{C}_{-1}$ are shown in Fig.\ \ref{C-1xyz}. The projection onto the plane~$\X$ is the left branch (thick line) of the hyperbola with the algebraic equation
\beq
  \left(y-\frac{1}{2}\right)^2-3z^2=1\,.
\eeq
The equations of its asymptotes are $z=\pm(y-1/2)/\sqrt{3}$. The projection of $\mathcal{C}_{-1}$ onto $\Y$ has the algebraic equation
\beq
  3x^4+8x^2-64z^2 = 16\,.
\eeq
The projection onto $\Z$ is part of the parabola with equation $x=-3y^2/8$.

\section{The edge of regression}

In the following we denote by $\R$ the edge of regression of the developable surface $\Ol$ (see \cite[pp.\ 119-125]{Strubecker}, and Fig.\ \ref{B01}, Fig.\ \ref{R}).
\begin{theorem} \label{Gratlinie}
For $t\in I_1$, parametric equations of $\R$ are given by
\begin{align*}
  x = g_1(t) = {} & \frac{\sin t-\tan t}{3}\,,\quad
  y = g_2(t) = \frac{2+3\cos t-3\cos^2 t-2\cos^3 t}
			{6(1+\cos t)\cos t}\,,\displaybreak[0]\\
  z = g_3(t) = {} & \!\pm\!\frac{(1+2\cos t)^{3/2}}{3(1+\cos t)\cos t}\,.
\end{align*}
\end{theorem}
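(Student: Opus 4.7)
The plan is to compute $\R$ directly as the envelope of the one-parameter family of generating lines of $\Ol$ on one branch of the parametrization. Writing a ruling as $\sigma(t,m) = P(t) + m\,D(t)$ with $P(t) := \alpha(t) = (\sin t,\,-\tfrac{1}{2}-\cos t,\,0)$ and $D(t) := B_1(t) - \alpha(t)$, an elementary simplification gives
$$D(t) = \left(-\sin t,\;\frac{1+\cos t+\cos^2 t}{1+\cos t},\;\frac{\sqrt{1+2\cos t}}{1+\cos t}\right).$$
A point $g(t) = P(t) + m^*(t)\,D(t)$ on this ruling lies on the cuspidal edge precisely when $g'(t)$ is parallel to $D(t)$, equivalently when there is a scalar $\mu(t)$ with
$$P'(t) + m^*(t)\,D'(t) = \mu(t)\,D(t).$$
This vector equation has three scalar components but only two unknowns; its consistency for every $t$ is equivalent to the developability of $\Ol$ (the coplanarity of $P'$, $D$, $D'$) and it determines $m^*(t)$ uniquely.

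In the second step I would eliminate $\mu$ between the $x$- and $z$-coordinate equations. The $x$-coordinate reads $(1-m^*)\cos t = -\mu\sin t$, and the $z$-coordinate, after computing $D_3'(t) = \sin t\cos t/[(1+\cos t)^2\sqrt{1+2\cos t}]$, yields $\mu = m^*\sin t\cos t/[(1+\cos t)(1+2\cos t)]$. Equating the two expressions and simplifying with the identity $\sin^2 t - (1+\cos t)(1+2\cos t) = -3\cos t\,(1+\cos t)$ produces
$$m^*(t) = \frac{1+2\cos t}{3\cos t}\,.$$

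The third step is to substitute $m^*(t)$ into $g_i(t) = P_i(t) + m^*(t)\,D_i(t)$. The formulas for $g_1$ and the positive branch of $g_3$ drop out immediately; for $g_2$ one expands $(1+2\cos t)(1+\cos t+\cos^2 t) = 1 + 3\cos t + 3\cos^2 t + 2\cos^3 t$ and combines with $-\tfrac{1}{2}-\cos t$ over the common denominator $6(1+\cos t)\cos t$, where the numerator collapses to $2 + 3\cos t - 3\cos^2 t - 2\cos^3 t$, yielding the claimed $g_2$. The negative branch of $g_3$ then follows from the reflection symmetry of $\Ol$ in the plane $\Z$, exactly as in the derivation of $\g_2(\la,t)$ in Corollary~\ref{touching_curve}. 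I expect the main obstacle to be the algebraic bookkeeping in the $g_2$-step; the unused $y$-coordinate of the vector equation above serves as a ready consistency check.
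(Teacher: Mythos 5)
Your proposal is correct, and I checked the computations: $D(t)=B_1(t)-\alpha(t)$ simplifies as you state, the $x$- and $z$-components of $P'+m^*D'=\mu D$ give $(1-m^*)\cos t=-\mu\sin t$ and $\mu=m^*\sin t\cos t/[(1+\cos t)(1+2\cos t)]$, and eliminating $\mu$ via $(1+\cos t)(1+2\cos t)-\sin^2 t=3\cos t(1+\cos t)$ yields $m^*(t)=(1+2\cos t)/(3\cos t)$; substitution then reproduces $g_1$, $g_2$, $g_3$ exactly. This is, however, a genuinely different route from the paper's. The paper obtains $\R$ from the tangential system of quadrics: it solves $f''_\la(\ka_1,\ka_2,\ka_3)=0$ along the touching curve for $\la=\phi(t)=(1+2\cos t)/[(2+\cos t)\cos t]$ and sets $g_j(t)=\ka_j(\phi(t),t)$, following Czuber's three-equation characterization $f_\la=f'_\la=f''_\la=0$ of the envelope's edge of regression. (The two are consistent: $\psi(\phi(t),t)=(1+2\cos t)/(3\cos t)$ equals your $m^*(t)$.) Your approach is more elementary and self-contained — it uses only the ruled parametrization \eqref{omega_i} and the classical condition $g'(t)\parallel D(t)$ for the cuspidal edge of a developable, with no reference to the inscribed quadrics — while the paper's derivation keeps $\R$ tied to the family $\Q$, which it exploits afterwards (e.g.\ in Corollary~\ref{r_i} and Theorem~\ref{common_lines}). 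The one step you defer rather than execute is the consistency of the overdetermined $3\times 2$ system: determining $m^*$ from the $x$- and $z$-components alone is only legitimate because $\det(P',D,D')=0$, i.e.\ because $\Ol$ is developable; you should either verify the unused $y$-component equation explicitly or cite the developability of $\Ol$ at that point, but this is a routine check, not a gap in the idea.
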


\begin{proof}
$\R$ is the solution of the system of equations
\beq \label{Czuber-System}
\left. \begin{array}{l} 
  f_\la(x,y,z)\; = 0\,,\quad
  f'_\la(x,y,z):=\dfrac{\partial}{\partial\la}\,f_\la(x,y,z)=0\,,\\[0.3cm]
  f''_\la(x,y,z):= \dfrac{\partial^2}{\partial\la^2}\,f_\la(x,y,z)=0
\end{array}\right\}
\eeq
with variable $\la$, see \cite[pp. 523-524]{Czuber}. Since the touching curve $\C$ is the intersection curve of the quadric $\Q$ and the quadric defined by $f'_\la(x,y,z)=0$, the parametric functions $\ka_1$, $\ka_2$, $\ka_3$ of $\C$ are not only solutions of
\beq
  f_\la(\ka_1(\la,t),\ka_2(\la,t),\ka_3(\la,t))=0\;\;
\eeq
but also of 
\beq
  f'_\la(\ka_1(\la,t),\ka_2(\la,t),\ka_3(\la,t))=0\,.
\eeq
Furthermore, one finds
\begin{align*}
  & \hspace{-0.45cm} f''_\la(x,y,z)\\
  = {} & \frac{x^2}{(1-\la)^3} + \frac{3y^2(\la-1)\la-y\left(2-3\la-3\la^2+2\la^3\right)}
	{\left(1-\la+\la^2\right)^3} + \frac{z^2}{\la^3}
	-\frac{9(\la-1)\la}{\left(1-\la+\la^2\right)^3}\,.
\end{align*}
Solving the equation
\beq
  f''_\la(\ka_1(\la,t),\ka_2(\la,t),\ka_3(\la,t)) = 0
\eeq
for $\la$ yields
\beq
  \la = \phi(t) := \frac{1+2\cos t}{(2+\cos t)\cos t}\,.
\eeq
It follows that $g_j(t):=\ka_j(\phi(t),t)$, $j=1,2,3$, and
\beq
  x = g_1(t)\,,\quad y = g_2(t)\,,\quad z = g_3(t)\,.
\eeq
Due to the symmetry of $\Ol$ with respect to the plane $\Z$, we also have
\beq
  x = g_1(t)\,,\quad y = g_2(t)\,,\quad z = -g_3(t)\,. \qedhere
\eeq    
\end{proof}

\begin{corollary} \label{r_i}
With the system parameter $\la$, the edge of regression is given by
\begin{align*}
 \R 
  = {} &	\{r_1(\la),r_2(\la),r_3(\la)\:|\:\la\in\RR\setminus[0,1]\}\\
  \cup\hspace{0.12cm} {} & \{-r_1(\la),r_2(\la),r_3(\la)\:|\:
	\la\in\RR\setminus[0,1]\}\\
  \cup\hspace{0.12cm} {} & \{-r_1(\la),r_2(\la),-r_3(\la)\:|\:
	\la\in\RR\setminus[0,1]\}\\
  \cup\hspace{0.12cm} {} & \{r_1(\la),r_2(\la),-r_3(\la)\:|\:
	\la\in\RR\setminus[0,1]\}\,,
\end{align*}
where
\begin{align*}
  r_1(\la) = {} & \frac{\sqrt{(\la-1)^3\,[2-\la+2\rho(\la)]}}
       {\la\,[2-\la+\rho(\la)]}\,,\;\;
  r_2(\la) = \frac{\la^2+2\la-2+(\la-2)\,\rho(\la)}
       {2\la\,[2-\la+\rho(\la)]}\,,\\[0.2cm]
  r_3(\la) = {} & \frac{\mathrm{sgn}(\la)\,\sqrt{\la\,[2-\la+2\rho(\la)]}}
       {2-\la+\rho(\la)}\,,\;\;\;
  \rho(\la) = \mathrm{sgn}(\la)\,\sqrt{1-\la+\la^2}\,.
\end{align*}
\end{corollary}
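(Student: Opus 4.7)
The plan is to invert the relation $\lambda=\phi(t)=(1+2\cos t)/[(2+\cos t)\cos t]$ obtained in the proof of Theorem~\ref{Gratlinie}, and then re-express the parametric functions $g_1,g_2,g_3$ in terms of $\lambda$. The identity $\lambda(2+\cos t)\cos t = 1+2\cos t$ is equivalent to the quadratic $\lambda\cos^2 t + 2(\lambda-1)\cos t - 1 = 0$, so $\cos t = [(1-\lambda)\pm\sqrt{1-\lambda+\lambda^2}]/\lambda$. Testing one value on each side of the excluded interval (e.g.\ $\lambda=2$ and $\lambda=-1$) shows that exactly one root lies in $[-1,1]$, and that the correct choice is bundled by the single formula $\cos t = (1-\lambda+\rho(\lambda))/\lambda$ with $\rho(\lambda)=\mathrm{sgn}(\lambda)\sqrt{1-\lambda+\lambda^2}$; the $\mathrm{sgn}(\lambda)$ factor in $\rho$ is precisely what unifies the two sign choices.

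From this expression, short computations give $1+\cos t = (1+\rho)/\lambda$ and $1+2\cos t = (2-\lambda+2\rho)/\lambda$, while $\rho^2 = 1-\lambda+\lambda^2$ converts $\sin^2 t = 1-\cos^2 t$ into $(\lambda-1)(2-\lambda+2\rho)/\lambda^2$. The key algebraic tool for the subsequent simplifications is the conjugate identity $(1-\lambda+\rho)(1-\lambda-\rho)=-\lambda$, together with its consequence $(1+\rho)(1-\lambda+\rho) = (2-\lambda+2\rho)(2-\lambda+\rho)/3$, which turns the compound denominator $(1+\cos t)\cos t$ into $(2-\lambda+2\rho)(2-\lambda+\rho)/(3\lambda^2)$ and thereby forces the common factor $2-\lambda+\rho$ that appears in each $r_j$.

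Substituting these expressions into the formulas of Theorem~\ref{Gratlinie} and rationalising with the identities above, $g_3(t)=\pm(1+2\cos t)^{3/2}/[3(1+\cos t)\cos t]$ collapses to $\pm\sqrt{\lambda(2-\lambda+2\rho)}/(2-\lambda+\rho)$; the outer $\mathrm{sgn}(\lambda)$ in $r_3(\lambda)$ arises because, for $\lambda<0$, extracting a three-halves power from the positive quotient $(2-\lambda+2\rho)/\lambda$ of two negative quantities introduces an odd power of $\mathrm{sgn}(\lambda)$ that does not cancel. The same substitution-and-rationalisation pattern applied to $g_1 = \sin t\,(1-\sec t)/3$ and to $g_2 = (2+3\cos t-3\cos^2 t-2\cos^3 t)/[6(1+\cos t)\cos t]$ produces $r_1(\lambda)$ and $r_2(\lambda)$: for $g_1$ one uses $1-\sec t = (1-2\lambda+\rho)/(1-\lambda+\rho)$ together with the surd for $\sin t$, while for $g_2$ the cubic numerator admits a clean reduction modulo $\rho^2=1-\lambda+\lambda^2$ once $\cos t$ is eliminated.

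The four-fold union in the statement records two independent discrete symmetries: the two signs of $\sin t=\pm\sqrt{1-\cos^2 t}$ corresponding to $t$ and $-t$ in $I_1$ flip $x=r_1$ while leaving $r_2,r_3$ unchanged, and the mirror symmetry of $\Ol$ in the plane $\Z$, already used at the end of the proof of Theorem~\ref{Gratlinie}, flips $z=r_3$. The parameter range $\lambda\in\RR\setminus[0,1]$ is exactly the image of $\phi$ on $I_1\setminus\{0,\pm\pi/2,\pm 2\pi/3\}$: a direct check shows that $\cos t\in(0,1)$ maps monotonically onto $(1,\infty)$ and $\cos t\in(-1/2,0)$ onto $(-\infty,0)$, while the excluded $t$-values correspond to $\lambda\in\{0,1,\pm\infty\}$, where $\Q$ degenerates into $k_A$, $k_B$ or a hyperbolic paraboloid and no new edge-of-regression point arises. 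The anticipated obstacle is not conceptual but purely computational: threading the conjugate-surd identities and sign conventions through each $g_j$ so that the three rational-plus-surd expressions collapse to the stated closed forms with the common denominator $\lambda(2-\lambda+\rho)$ or $2(2-\lambda+\rho)$.
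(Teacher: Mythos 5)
Your proposal is correct and follows essentially the same route as the paper: both invert $\la=\phi(t)$ via the quadratic in $\cos t$ to get $\cos t=(1-\la+\rho(\la))/\la$ and then substitute back (your substitution into $g_j(t)$ is identical to the paper's evaluation of $\ka_j(\la,\phi_{1,2}^{-1}(\la))$, since $g_j(t)=\ka_j(\phi(t),t)$), with the same two discrete symmetries producing the four-fold union. You merely spell out more of the surd algebra, which the paper leaves as ``one easily finds.''
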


\begin{proof}
We consider the function
\beq
  \phi : \;
  I_1 \;\rightarrow\; \RR\,,\quad
  t \;\mapsto\; \phi(t)=\frac{1+2\cos t}{(2+\cos t)\cos t}\,,  
\eeq
used in the proof of Theorem \ref{Gratlinie}. We denote by $\phi_1$ the restriction of $\phi$ to the interval $(-2\pi/3,0)$, and by $\phi_2$ the restriction of $\phi$ to $(0,2\pi/3)$. One easily finds the respective inverse functions
\beq
  \phi_1^{-1}(\la) = -\!\arccos\dfrac{1-\la+\rho(\la)}{\la}\,,\quad
  \phi_2^{-1}(\la) =  \arccos\dfrac{1-\la+\rho(\la)}{\la}
\eeq
with $\rho(\la):=\mathrm{sgn}(\la)\,\sqrt{1-\la+\la^2}$ and $\la\in\RR\setminus[0,1]$, hence
\begin{align*}
  r_1(\la) := {} & \ka_1\big(\la,\phi_1^{-1}(\la)\big) 
  = \frac{\sqrt{(\la-1)^3\,[2-\la+2\rho(\la)]}}
       {\la\,[2-\la+\rho(\la)]}\,,\\[0.2cm]
  r_2(\la) := {} & \ka_2\big(\la,\phi_1^{-1}(\la)\big)
  = \frac{\la^2+2\la-2+(\la-2)\,\rho(\la)}
       {2\la\,[2-\la+\rho(\la)]}\,,\\[0.2cm]
  r_3(\la) := {} & \ka_3\big(\la,\phi_1^{-1}(\la)\big)
  = \frac{\mathrm{sgn}(\la)\,\sqrt{\la\,[2-\la+2\rho(\la)]}}
       {2-\la+\rho(\la)}\,,
\end{align*}
and
\beq
  \ka_1\big(\la,\phi_2^{-1}(\la)\big) = -r_1(\la)\,,\;\;
  \ka_2\big(\la,\phi_2^{-1}(\la)\big) =  r_2(\la)\,,\;\;
  \ka_3\big(\la,\phi_2^{-1}(\la)\big) =  r_3(\la)\,.
\eeq
It follows that
\beq
  x = \pm r_1(\la)\,,\quad y = r_2(\la)\,,\quad z = r_3(\la)\,,
\eeq
and, due to the symmetry of $\Ol$ with respect to the plane $\Z$, also
\beq
  x = \pm r_1(\la)\,,\quad y = r_2(\la)\,,\quad z = -r_3(\la)\,.
  \qedhere
\eeq
\end{proof}

\noindent
Every generating line of a developable surface is a tangent to its edge of regression. Obviously, $t=-\pi/2$ and $t=\pi/2$ are poles of the functions $g_1$, $g_2$, $g_3$ in Theorem \ref{Gratlinie}. One easily finds
\beq
\begin{array}{*2{l@{\;=\;}r@{\,,\quad}}l@{\;=\;}r}
\om_1\!\left(m,-\dfrac{\pi}{2}\right) & m-1 & 
\om_2\!\left(m,-\dfrac{\pi}{2}\right) & m-\dfrac{1}{2} & 
\om_3\!\left(m,-\dfrac{\pi}{2}\right) & m\,,\\[0.3cm]
\om_1\!\left(m,\,\dfrac{\pi}{2}\right) & 1-m & 
\om_2\!\left(m,\,\dfrac{\pi}{2}\right) & m-\dfrac{1}{2} & 
\om_3\!\left(m,\,\dfrac{\pi}{2}\right) & m\,.
\end{array}
\eeq
Hence, using abbreviated notation, the four asymptotes to $\R$ are
\beqn \label{tilde(A)}
\left.\begin{aligned}
  \widetilde{\A}_1 = {} &
	\{(m-1,\,m-1/2,\,m)\:|\:m\in\RR\}\,,\\[0.1cm]
  \widetilde{\A}_2 = {} &
	\{(1-m,\,m-1/2,\,m)\:|\:m\in\RR\}\,,\\[0.1cm]
  \widetilde{\A}_3 = {} &
	\{(1-m,\,m-1/2,\,-m)\:|\:m\in\RR\}\,,\\[0.1cm]
  \widetilde{\A}_4 = {} &
	\{(m-1,\,m-1/2,\,-m)\:|\:m\in\RR\}\,.
\end{aligned}\;\;\right\}
\eeqn
If the intersection point of the asymptotes $j$ and $k$ exists, we denote it by $S_{jk}$ and find:
\beq
\begin{array}{l}
  S_{12} = \left( 0,\, \frac{1}{2},\, 1\right),\;
  S_{23} = \left( 1,\,-\frac{1}{2},\, 0\right),\;
  S_{34} = \left( 0,\, \frac{1}{2},\,-1\right),\;
  S_{41} = \left(-1,\,-\frac{1}{2},\, 0\right).
\end{array}
\eeq
Note that $S_{12},S_{34}\in k_B$, and $S_{23},S_{41}\in k_A$.

\begin{figure}[h]
\begin{minipage}[c]{0.5\textwidth}
\includegraphics[scale=0.74]{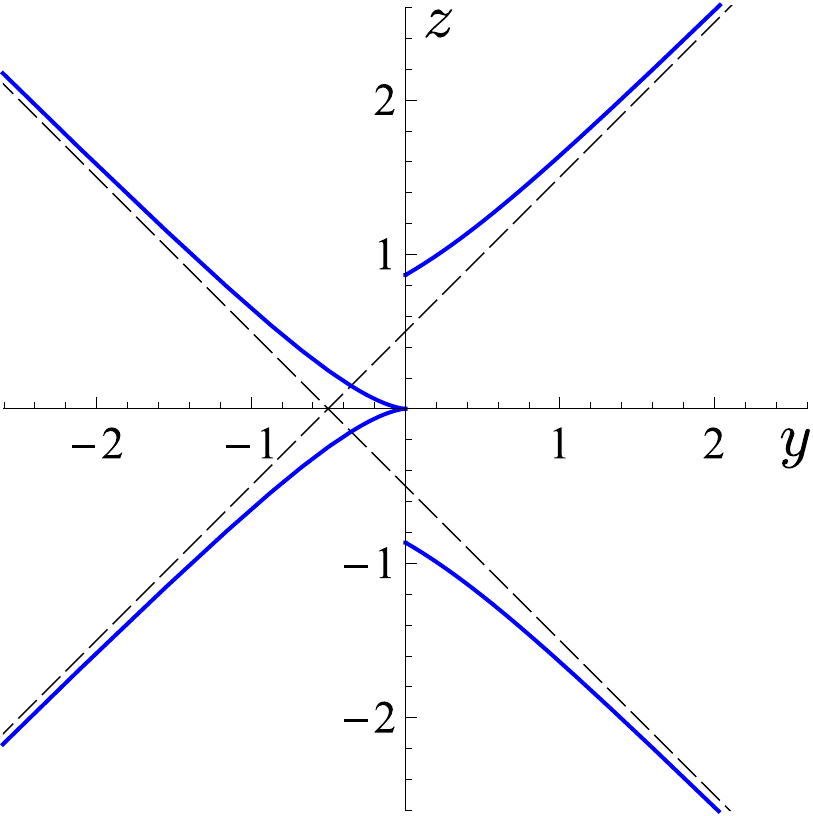}
\end{minipage}
\hspace{0.1cm}
\begin{minipage}[c]{0.5\textwidth}
\includegraphics[scale=0.74]{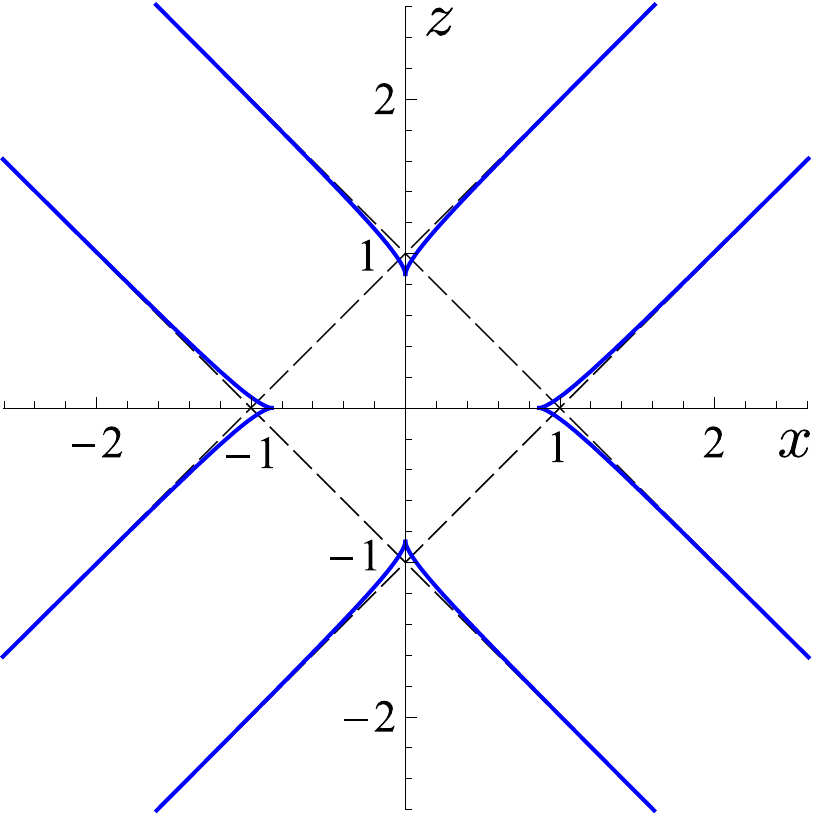}
\end{minipage}
\caption{\label{R} Projections of $\R$ and its asymptotes $\widetilde{\A}_k=\A_k(\pm\infty)$, $k=1,2,3,4$, onto the planes $\X$ and $\Y$, $-2.5\leq x,y,z\leq 2.5$}
\end{figure}

\newpage
\noindent
Comparing Eqs.\ \eqref{A(inf)} and \eqref{tilde(A)} we find:
\begin{theorem} \label{asymptotes}
For $\la\in\RR\setminus[-1,2]$ and $k\in\{1,2,3,4\}$, the asymptote $\A_k(\la)$ of $\C$ tends to the asymptote $\widetilde{\A}_k$ of $\R$ if $\la\rightarrow\pm\infty$. 
\end{theorem}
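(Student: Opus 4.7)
The plan is to observe that both sides of the asserted convergence are already explicitly in hand: the limits $\lim_{\la\to\pm\infty}\A_k(\la)$ are displayed in \eqref{A(inf)}, while the asymptotes $\widetilde{\A}_k$ of $\R$ are displayed in \eqref{tilde(A)}. The proof therefore reduces to justifying the limits in \eqref{A(inf)} and then reading them off against \eqref{tilde(A)} line by line.

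First I would verify \eqref{A(inf)} by taking componentwise limits of $\tta_1(\la,\nu)$, $\tta_2(\la,\nu)$, $\tta_3(\la,\nu)$ (for fixed $\nu$) as $\la\to\pm\infty$. Each $\tta_j$ is a rational function in $\la$, possibly multiplied by a square root whose radicand tends to $1$, so only the leading $\la$-powers of numerator and denominator matter. The ratio $(1-\la+\la^2)/(2+\la-\la^2)$ tends to $-1$ and $\sqrt{1-1/\la^2}\to 1$, so $\tta_1(\la,\nu)\to\nu-1$. In $\tta_2$ the $\nu$-free summand $(2-2\la-\la^2)/(2\la(\la-2))$ tends to $-1/2$, while the $\nu$-coefficient $(1-\la+\la^2)^2/(\la(\la-2)(\la^2-1))$ tends to $1$, so $\tta_2(\la,\nu)\to\nu-1/2$. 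Finally $(1-\la+\la^2)/(\la^2-1)\to 1$ and $\sqrt{\la/(\la-2)}\to 1$, so $\tta_3(\la,\nu)\to\nu$. This gives the first line of \eqref{A(inf)}, and the remaining three lines follow at once by tracking the extra signs in $\pm\tta_1,\pm\tta_3$ that distinguish $\A_2,\A_3,\A_4$ from $\A_1$.

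Next I would compare \eqref{A(inf)} with \eqref{tilde(A)} directly. Renaming the parameter $\nu$ as $m$, the two parametrized lines match literally for every $k\in\{1,2,3,4\}$: in both lists the sign pattern of the first and third coordinates is $(+,+),(-,+),(-,-),(+,-)$ as $k$ runs from $1$ to $4$, while the middle coordinate is uniformly $m-1/2$. Hence the correspondence is index-preserving, and $\lim_{\la\to\pm\infty}\A_k(\la)=\widetilde{\A}_k$ for each $k$.

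The only place one might slip is the sign bookkeeping between the $\pm\tta_j$ appearing in the definitions of $\A_2,\A_3,\A_4$ and the $\pm$ signs appearing in $\widetilde{\A}_2,\widetilde{\A}_3,\widetilde{\A}_4$; I would lay the four pairs of parametrizations side by side in a small display to make the index-by-index identification completely transparent. No deeper obstacle arises, since the limit computation is purely a leading-order rational-function calculation.
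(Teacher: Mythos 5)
Your proposal is correct and follows essentially the same route as the paper, which proves the theorem simply by comparing Eqs.\ \eqref{A(inf)} and \eqref{tilde(A)}; you additionally spell out the leading-order limit computation behind \eqref{A(inf)}, which the paper states without detail in Case 3. The componentwise limits and the index-by-index sign matching you describe are all accurate.
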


$\mathcal{C}_0$ and $\mathcal{C}_1$ are double circular arcs of $\mathcal{Q}_0=k_A$ and $\mathcal{Q}_1=k_B$, respectively, with $\mathcal{C}_0,\mathcal{C}_1\subset\Ol$ (see Fig. \ref{B02}). $\R$ has cusps in the four endpoints of the double curves $\mathcal{C}_0$, $\mathcal{C}_1$ (see \cite[p. 206]{Rohn_Papperitz}). $\mathcal{C}_0$ has the endpoints
\begin{align*}
& \left(\sin\!\left(\!-\frac{2\pi}{3}\right),\:-\frac{1}{2}
	-\cos\!\left(\!-\frac{2\pi}{3}\right),\:0\right) =  
  \left(\!-\frac{\sqrt{3}}{2}\,,\:0\,,\:0\right),\\
& \left(\sin\frac{2\pi}{3},\:-\frac{1}{2}
	-\cos\frac{2\pi}{3},\:0\right) =  
  \left(\frac{\sqrt{3}}{2}\,,\:0\,,\:0\right).
\end{align*} 
Due to the symmetry of $\Ol$, the endpoints of $\mathcal{C}_1$ are $\left(0,0,-\sqrt{3}/2\right)$, $\left(0,0,\sqrt{3}/2\right)$.

\section{The self-polar tetrahedron}

\begin{theorem}
The faces of the common self-polar tetrahedron $\PT$ of the inscribed quadrics $\Q$ are formed by the planes
\begin{align*}
  \X_1
	= {} & \left\{(x_0,x_1,x_2,x_3)\in\mathbb{P}_3(\mathbb{C})\:\Big|\:
		   x_1 = 0\right\},\\[0.1cm]
  \X_3
	= {} & \left\{(x_0,x_1,x_2,x_3)\in\mathbb{P}_3(\mathbb{C})\:\Big|\:
		   x_3 = 0\right\},\displaybreak[0]\\[0.1cm]
  \mathcal{I}_1
	= {} & \left\{(x_0,x_1,x_2,x_3)\in\mathbb{P}_3(\mathbb{C})\:\Big|\:
		   x_2 = \frac{\sqrt{3}}{2}\,\ii\,x_0\right\},
		   \displaybreak[0]\\[0.1cm]
  \mathcal{I}_2
	= {} & \left\{(x_0,x_1,x_2,x_3)\in\mathbb{P}_3(\mathbb{C})\:\Big|\:
		   x_2 = -\frac{\sqrt{3}}{2}\,\ii\,x_0\right\}.
\end{align*}
The vertices of $\PT$ are the ideal points $X_\infty=[0,1,0,0]$ and $Z_\infty=[0,0,0,1]$ of the $x$-axis and the $z$-axis, respectively, and the points
\beq
  P = \left[1,\,0,\,\frac{\sqrt{3}}{2}\,\ii,\,0\right]\,,\quad
  Q = \left[1,\,0,\,-\frac{\sqrt{3}}{2}\,\ii,\,0\right]\,.
\eeq
\end{theorem}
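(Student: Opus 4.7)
The plan is to identify $\PT$ as the common self-polar tetrahedron of the tangential pencil $\widehat{\F}_\la$ constructed in the proof of Theorem~1. For any such pencil in $\mathbb{P}_3$, the four faces of the common self-polar tetrahedron are, in the dual interpretation, the singular planes of the four degenerate members of the pencil, and the vertices are then recovered as the intersections of the faces in triples. The whole problem therefore reduces to locating the degenerate members of $\widehat{\F}_\la$ and reading off their kernels.

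First I would write down the $4\times 4$ symmetric matrix $M_\la=(1-\la)M_0+\la M_1$ of $F_\la(\bar u)$ in plane coordinates. This matrix is block-diagonal in the splitting $(u_0,u_2)\oplus(u_1)\oplus(u_3)$, because $u_1$ occurs only in $F_0$, $u_3$ only in $F_1$, and the mixed term $u_0u_2$ has coefficient $-2+4\la$. The block structure makes subsequent computations routine: the determinant factors as
\[
\det M_\la \;=\; -256\,\la\,(1-\la)\,(1-\la+\la^2),
\]
in agreement with the already-known fact that $\widehat{\F}_0=\widehat{\K}_A$ and $\widehat{\F}_1=\widehat{\K}_B$ are rank-$3$ elliptic cylinders, and producing two further degenerate members at the conjugate complex values $\la_{\pm}=\tfrac12(1\pm\ii\sqrt{3})$.

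Next, for each of the four roots I would extract the one-dimensional kernel of $M_\la$, which, interpreted dually, is a plane in point coordinates. The kernels for $\la=0$ and $\la=1$ are $[0,0,0,1]$ and $[0,1,0,0]$, corresponding to the planes $x_3=0$ and $x_1=0$, i.e.\ $\X_3$ and $\X_1$. At $\la=\la_{\pm}$, the non-vanishing diagonal blocks force $v_1=v_3=0$, and the singular $(u_0,u_2)$-block yields kernel ratio $v_0:v_2=\sqrt{3}:\pm 2\ii$; reading this as the plane equation $v_0x_0+v_2x_2=0$ gives $x_2=\pm\tfrac{\sqrt{3}}{2}\,\ii\,x_0$, which are exactly the planes $\mathcal{I}_1$ and $\mathcal{I}_2$.

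Finally, the four vertices are obtained by intersecting triples of faces: $\X_1\cap\X_3\cap\mathcal{I}_1=\{P\}$ and $\X_1\cap\X_3\cap\mathcal{I}_2=\{Q\}$, while $\mathcal{I}_1\cap\mathcal{I}_2$ forces $x_0=x_2=0$, so intersecting it with $\X_3$ or $\X_1$ gives the ideal points $X_\infty$ and $Z_\infty$ of the $x$- and $z$-axes. I expect the only genuine obstacle to be the sign bookkeeping in the kernel of the $(u_0,u_2)$-block, needed to match the precise signs on $\mathcal{I}_1,\mathcal{I}_2$ and on $P,Q$ as stated in the theorem; the remainder is elementary linear algebra in $\mathbb{P}_3(\mathbb{C})$.
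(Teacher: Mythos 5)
Your proposal is correct and follows essentially the same route as the paper: both identify the four degenerate members of the tangential system (at $\la=0$, $\la=1$, and $\la=\tfrac12\pm\tfrac{\sqrt3}{2}\,\ii$), take their singular planes as the faces of $\PT$, and obtain the vertices by intersecting triples of faces. The only difference is that you detect degeneracy via $\det M_\la=-256\,\la(1-\la)(1-\la+\la^2)$ and the kernels on the plane-coordinate side, whereas the paper reads the same information off the vanishing denominators of $\tilde f_\la$ in point coordinates --- equivalent computations.
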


\begin{proof}
In a tangential system of quadrics there are four which degenerate to conics, and their planes are the faces of the common self-polar tetrahedron \cite[pp.\ 205]{Rohn_Papperitz}, \cite[p.\ 254]{Sommerville}. $\Q$ degenerates to a conic if one of the denominators in
$\tilde{f}_\la(x_0,x_1,x_2,x_3)$, see \eqref{tilde(f)}, vanishes.

For $\lambda=0$ it follows that $x_3^2=0$, and therefore the plane $\X_3$ delivers the first face of the tetrahedron $\PT$. For $\lambda=1$ we have  $x_1^2=0$, and hence $\X_1$ is the second face of $\PT$. The two remaining faces follow from the equation $1-\lambda+\lambda^2=0$. Its solutions are
\beq
  \lambda_1 = \frac{1}{2}+\frac{\sqrt{3}}{2}\,\ii
  \quad\mbox{and}\quad
  \lambda_2 = \frac{1}{2}-\frac{\sqrt{3}}{2}\,\ii\,.
\eeq
So we have
\begin{align*}
  \left(x_2+\left(\frac{1}{2}-\la_1\right)x_0\right)^2
  =   {} & \left(x_2-\frac{\sqrt{3}}{2}\,\ii\,x_0\right)^2 = 0\,,\\
  \left(x_2+\left(\frac{1}{2}-\la_2\right)x_0\right)^2
  =   {} & \left(x_2+\frac{\sqrt{3}}{2}\,\ii\,x_0\right)^2 = 0\,; 
\end{align*}
hence the planes $\mathcal{I}_1$ and $\mathcal{I}_2$ are the third and fourth face of $\PT$. The vertices of $\PT$ are the intersection points of the respective planes:
\begin{align*}
  \X_1\cap\X_3\cap\mathcal{I}_1 = {} & P\,,\hspace{0.7cm}
  \X_1\cap\X_3\cap\mathcal{I}_2 = Q\,,\\[0.1cm]
  \X_3\cap\,\mathcal{I}_1\cap\mathcal{I}_2 = {} & X_\infty\,,\quad
  \X_1\cap\,\mathcal{I}_1\cap\mathcal{I}_2 = Z_\infty\,.\qedhere
\end{align*}   
\end{proof}

\noindent
The degenerate quadrics $\mathcal{Q}_0$ and $\mathcal{Q}_1$ are the circles $k_A$ and $k_B$, respectively. For the degenerate quadrics $\mathcal{Q}_{\lambda_1}$, $\mathcal{Q}_{\lambda_2}$ we find the equations
\begin{align*}
 f_{\lambda_1}(x,y,z) = {} &
  \frac{x^2}{\left(\frac{1}{2}-\frac{\sqrt{3}}{2}\,\ii\right)^2} +
  \frac{z^2}{\left(\frac{1}{2}+\frac{\sqrt{3}}{2}\,\ii\right)^2} - 1
  = 0\,,\displaybreak[0]\\
 f_{\lambda_2}(x,y,z) = {} &
  \frac{x^2}{\left(\frac{1}{2}+\frac{\sqrt{3}}{2}\,\ii\right)^2} +
  \frac{z^2}{\left(\frac{1}{2}-\frac{\sqrt{3}}{2}\,\ii\right)^2} - 1
  = 0\,.
\end{align*} 
$\mathcal{Q}_{\lambda_1}$ and $\mathcal{Q}_{\lambda_2}$ can be considered as ellipses with complex lenghts
\beq
  \frac{1}{2}+\frac{\sqrt{3}}{2}\,\ii \quad\mbox{and}\quad
  \frac{1}{2}-\frac{\sqrt{3}}{2}\,\ii
\eeq
of their semi-axes, and respective center points
\beq
  \left(0,\,\frac{\sqrt{3}}{2}\,\ii,\,0\right) \quad\mbox{and}\quad
  \left(0,\,-\frac{\sqrt{3}}{2}\,\ii,\,0\right).
\eeq
Note that these center points are the vertices $P$ and $Q$ of $\PT$ in non homogeneous coordinates.  


\section{Common generating lines of $\Q$ and $\Ol$}

Only two of the conic sections $\mathcal{Q}_0$, $\mathcal{Q}_1$, $\mathcal{Q}_{\la_1}$, $\mathcal{Q}_{\la_2}$ are real. Therefore (see \cite[p.\ 206]{Rohn_Papperitz}), the quadrics $\Q$ are divided into two sets; one of these sets consists of ruled surfaces, each having four common generating lines with the developable surface $\Ol$. Clearly, these ruled surfaces are the one-sheeted hyperboloids $\Q$, $\la\in\RR\setminus[0,1]$, and the hyperbolic paraboloid $\mathcal{Q}_{\pm\infty}$.

\begin{theorem} \label{common_lines}
{\em (i)} For fixed value of $\la\in\RR\setminus[0,1]$, the four common generating lines of the one-sheeted hyperboloid $\Q$ and the extended oloid $\Ol$ are
\begin{align*}
 \G_1(\la) = {} & \{(\ot_1(m,\la),\ot_2(m,\la),\ot_3(m,\la))\:|\:
	m\in\RR\}\,,\\[0.1cm]
 \G_2(\la) = {} & \{(-\ot_1(m,\la),\ot_2(m,\la),\ot_3(m,\la))\:|\:
	m\in\RR\}\,,\\[0.1cm]
 \G_3(\la)= {} & \{(-\ot_1(m,\la),\ot_2(m,\la),-\ot_3(m,\la))\:|\:
	m\in\RR\}\,,\\[0.1cm]
 \G_4(\la) = {} & \{(\ot_1(m,\la),\ot_2(m,\la),-\ot_3(m,\la))\:|\:
	m\in\RR\}\,,
\end{align*}
with
\begin{align*}
  \ot_1(m,\la) = {} & (1-m)\,\frac{\sqrt{(\la-1)(2-\la+2\rho(\la))}}
	{-|\la|}\,,\\[0.1cm]
  \ot_2(m,\la) = {} & (1-m)\,\frac{\la-2-2\rho(\la)}{2\la}
	+ m\,\frac{2\la-1-\rho(\la)}{2(1+\rho(\la))}\,,\\[0.1cm]
  \ot_3(m,\la) = {} & m\,\frac{\mathrm{sgn}(\la)\,
	\sqrt{\la(2-\la+2\rho(\la))}}{1+\rho(\la)}\,,
\end{align*}
where $\rho(\la)=\mathrm{sgn}(\la)\,\sqrt{1-\la+\la^2}$.\\[0.2cm]
{\em (ii)} $\G_1(\la)$, $\G_2(\la)$, $\G_3(\la)$, $\G_4(\la)$ are the tangents to $\R$, and to $\C$, in the respective points
\beq
 \begin{array}{ll}
   P_1(\la) = (r_1(\la),r_2(\la),r_3(\la))\,, &
   P_2(\la) = (-r_1(\la),r_2(\la),r_3(\la))\,,\\[0.2cm]
   P_3(\la) = (-r_1(\la),r_2(\la),-r_3(\la))\,, &
   P_4(\la) = (r_1(\la),r_2(\la),-r_3(\la))
 \end{array}
\eeq
with $r_j(\la)$, $j=1,2,3$, according to Corollary \ref{r_i}.    
\end{theorem}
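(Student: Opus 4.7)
The plan is to characterize common generating lines of $\Q$ and $\Ol$ by a polynomial condition on $t$, reduce this condition to $\la=\phi(t)$ with $\phi$ from the proof of Theorem~\ref{Gratlinie}, and then read off both the parametrizations in (i) and the tangencies in (ii) directly from Corollary~\ref{r_i} and the tangent-developable structure of $\Ol$.

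Since $\Ol$ is developable, every common generating line must have the form $\Lt$ for some $t\in I_1\cup I_2$. Inserting $\om_i(m,t)$ from \eqref{omega_i} into $f_\la$ yields a polynomial $p(m)$ of degree at most two in $m$; the proof of Corollary~\ref{touching_curve} has already shown that $p$ has a double root at $m=\psi(\la,t)$, so $\Lt\subset\Q$ is equivalent to the vanishing of the leading coefficient $a(t,\la)$ of $p$ in $m$. A direct computation with $c=\cos t$ gives
\[
  a(t,\la)=\frac{\sin^2 t}{1-\la}+\frac{(1+c+c^2)^2}{(1+c)^2(1-\la+\la^2)}+\frac{1+2c}{(1+c)^2\,\la},
\]
and I expect the cleared-denominator numerator to factor as $-[c(c+2)\la-(1+2c)](1+\la c)^2$. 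The square factor $(1+\la c)^2=(1+\la\cos t)^2$ corresponds precisely to the poles of $\g(\la,\cdot)$ (where the touching point drifts off to infinity) and is therefore extraneous; the essential condition is $\la=\phi(t):=(1+2\cos t)/[(2+\cos t)\cos t]$, the function from the proof of Theorem~\ref{Gratlinie}.

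For $\la\in\RR\setminus[0,1]$, Corollary~\ref{r_i} says that $\phi(t)=\la$ has exactly two solutions in $I_1$, namely $t=\phi_1^{-1}(\la)$ and $t=\phi_2^{-1}(\la)=-\phi_1^{-1}(\la)$; using the $z$-reflection via $t\mapsto 4\pi/3-t$ on $I_2$ supplies two more, giving four common generating lines in all. To obtain $\G_1(\la)$ explicitly I would substitute $t=\phi_1^{-1}(\la)$ into $\om(m,t)=(1-m)\alpha(t)+m\beta(t)$ using $\cos(\phi_1^{-1}(\la))=(1-\la+\rho)/\la$ and $\sin(\phi_1^{-1}(\la))=-\sqrt{(\la-1)(2-\la+2\rho)}/|\la|$; simplification of the three components yields the stated $\ot_1,\ot_2,\ot_3$. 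The remaining three lines then arise from the $\X$- and $\Z$-reflection symmetries of $\Ol$, matching the signs in the statement of $\G_2,\G_3,\G_4$.

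For part (ii), $P_1(\la)=g(\phi_1^{-1}(\la))$ is, by Corollary~\ref{r_i}, the point of $\R$ associated to $t=\phi_1^{-1}(\la)$, and it lies on $\G_1(\la)=\Lt$ by construction. Since every generator of a tangent developable touches its edge of regression at the corresponding cuspidal point, $\G_1(\la)$ is tangent to $\R$ at $P_1(\la)$. Moreover $P_1(\la)=\g(\la,\phi_1^{-1}(\la))\in\C$, and any smooth curve on the tangent developable $\Ol$ crossing its cuspidal edge $\R$ has tangent along the generator at that crossing; applied to $\C$, this shows that its tangent at $P_1(\la)$ is again $\G_1(\la)$. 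The analogous statements for $P_2,P_3,P_4$ follow from the $\X$- and $\Z$-symmetries. The main technical hurdle is the factorization of $a(t,\la)$ extracting the extraneous $(1+\la\cos t)^2$; once that is done, everything else reduces either to trigonometric simplification or to direct appeals to already-established results on $\R$ and $\C$.
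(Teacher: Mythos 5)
Your proof is correct and, for part (i), follows the same basic route as the paper: both reduce the problem to finding the values of $t$ for which the generator $\Lt$ lies entirely in $\Q$, arrive at the quadratic $\la\cos^2t+2(\la-1)\cos t-1=0$ (your condition $\la=\phi(t)$ is exactly this equation, whose admissible root is the paper's $\cos t=(1-\la+\rho(\la))/\la$), and then substitute back into $\om(m,t)$; your trigonometric evaluations of $\cos$ and $\sin$ at $\phi_1^{-1}(\la)$ reproduce the stated $\ot_j$. The paper simply ``solves $f_\la(\om_1,\om_2,\om_3)=0$ for $t$'', whereas you extract the condition from the leading coefficient of the quadratic in $m$ together with an explicit factorization; I checked that factorization and it is correct, and your version has the merit of making visible both the link to the edge-of-regression function $\phi$ of Theorem \ref{Gratlinie} and the extraneous factor $(1+\la\cos t)^2$. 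To discard that factor rigorously, note that when $1+\la\cos t=0$ the quadratic in $m$ degenerates to a nonzero constant (its discriminant vanishes identically, so the leading and linear coefficients vanish together while the constant term survives), hence $\Lt\not\subset\Q$; ``the touching point drifts to infinity'' is the right intuition but not yet an argument. For part (ii) you diverge more: the paper verifies everything by explicit computation ($\ka_j(\la,-\tilde{t}(\la))=r_j(\la)$, $\ot_j(\widehat{m}(\la),\la)=r_j(\la)$, plus Czuber's theorem), while you invoke the general facts that generators of a tangent developable are tangent to the edge of regression and that a regular curve on the developable through a cuspidal-edge point is tangent to the generator there. This is cleaner and nearly computation-free, but it needs one check the paper's route gets for free: that $\C$ is regular at $P_1(\la)$ (it is, since $\dot{\ka}_1(\la,t)\neq 0$ there, $\la+\cos t$ evaluating to $\rho(\la)(1+\rho(\la))/\la\neq 0$); without regularity the cuspidal-edge argument says nothing about the tangent of $\C$.
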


\begin{proof}
(i) As already known, the parametric equations of the generating lines of $\Ol$ are
\begin{align*}
  x = \hspace{0.28cm}\om_1(m,t) = {} & (1-m)\sin t,\displaybreak[0]\\[0.1cm]
  y = \hspace{0.28cm}\om_2(m,t) = {} & (1-m)
	\left(-\frac{1}{2}-\cos t\right)
	+ m\left(\frac{1}{2}-\frac{\cos t}{1+\cos t}\right),\\[0.1cm]
  z = \pm\om_3(m,t) = {} &\frac{m\,\sqrt{1+2\cos t}}{1+\cos t}\,.
\end{align*}
Substituting $x=\om_1(m,t)$, $y=\om_2(m,t)$, $z=\om_3(m,t)$ in $f_\la(x,y,z)=0$, and solving this equation for~$t$, we find
\beq
  t = \pm\tilde{t}(\la) \quad\mbox{with}\quad
  \tilde{t}(\la) = \arccos\frac{1-\la\pm\sqrt{1-\la+\la^2}}{\la}\,.
\eeq
Since we are only interested in real solutions, we can write
\beqn \label{tilde{t}}
  \tilde{t}(\la)
	= \arccos\frac{1-\la+\rho(\la)}{\la}
\eeqn
with the function $\rho$ from Corollary \ref{r_i}.
It follows that
\begin{align*}
 \om_1(m,\pm\tilde{t}(\la))
  = {} & \!\pm\!(1-m)\,\frac{\sqrt{(\la-1)(2-\la+2\rho(\la))}}
	{|\la|}\,,\\[0.1cm]
 \om_2(m,\pm\tilde{t}(\la))
  = {} & (1-m)\,\frac{\la-2-2\rho(\la)}{2\la}
	+ m\,\frac{2\la-1-\rho(\la)}{2(1+\rho(\la))}\,,\\[0.1cm]
 \om_3(m,\pm\tilde{t}(\la))
  = {} & m\,\frac{\mathrm{sgn}(\la)\,\sqrt{\la(2-\la+2\rho(\la))}}
	{1+\rho(\la)}\,.
\end{align*}
We put $\ot_j(m,\la):=\om_j(m,-\tilde{t}(\la))$, $j=1,2,3$. This yields $\G_1(\la)$ and $\G_2(\la)$. Due to the symmetry of $\Q$ and $\Ol$ with respect to the plane $\Z$, the lines $\G_3(\la)$ and $\G_4(\la)$ follow.\\[0.2cm]
(ii) The tangent
\beq
  T_\la(t) = \{(\tau_1(\la,t,\mu),\,\tau_2(\la,t,\mu),\,\tau_3(\la,t,\mu))
	\:|\:\mu\in\RR\}\,,\quad t\in I_1\,,
\eeq
to $\C$, see \eqref{T_la}, is a generating line of $\Q$ for all values of $t$ that are solutions of
\beq
  f_\la(\tau_1(\la,t,\mu),\tau_2(\la,t,\mu),\tau_3(\la,t,\mu)) = 0\,.
\eeq
One finds $t=\pm\tilde{t}(\la)$ with $\tilde{t}(\la)$ from \eqref{tilde{t}}. At first we consider only $t=-\tilde{t}(\la)$. Calculation shows that
\beq
  \ka_j(\la,-\tilde{t}(\la)) = r_j(\la)\,,\quad j=1,2,3.
\eeq
Hence, the tangent $T^{(1)}(\la):=T_\la(-\tilde{t}(\la))$ touches $\C$ in the point $P_1(\la)=(r_1(\la),r_2(\la),r_3(\la))\in\R$. According to \cite[p.\ 489]{Czuber}, $T^{(1)}(\la)$ is equal to the tangent to $\R$ in this point. The common generating lines are tangents to the edge of regression \cite[p.\ 206]{Rohn_Papperitz}. Thus one finds  
\beq
  \ot_j(\widehat{m}(\la),\la) = r_j(\la)\,,\quad j=1,2,3,
\eeq
for
\beq
  \widehat{m}(\la) := \frac{1+\rho(\la)}{2-\la+\rho(\la)}\,.
\eeq
It follows that $T^{(1)}(\la)=\G_1(\la)$. Due to symmetry with respect to the planes $\X$ and $\Z$, with $\tilde{\tau}_j(\la,\mu):=\tau_j(\la,-\tilde{t}(\la),\mu)$ we also have   
\begin{align*}
  T^{(2)}(\la)
  := {} & \{(-\tilde{\tau}_1(\la,\mu),\,\tilde{\tau}_2(\la,\mu),\,
	\tilde{\tau}_3(\la,\mu))
	\:|\:\mu\in\RR\} \hspace{0.29cm} = \G_2(\la)\,,\\
  T^{(3)}(\la)
  := {} & \{(-\tilde{\tau}_1(\la,\mu),\,\tilde{\tau}_2(\la,\mu),\,
	-\tilde{\tau}_3(\la,\mu))
	\:|\:\mu\in\RR\} = \G_3(\la)\,,\\
  T^{(4)}(\la)
  := {} & \{(\tilde{\tau}_1(\la,\mu),\,\tilde{\tau}_2(\la,\mu),\,
	-\tilde{\tau}_3(\la,\mu))
	\:|\:\mu\in\RR\} \hspace{0.29cm} = \G_4(\la)\,. \qedhere
\end{align*}
\end{proof}

\noindent
As an example, Fig.\ \ref{B03} shows the common generating lines $\G_1(4)$, $\G_2(4)$, $\G_3(4)$, $\G_4(4)$ of $\mathcal{Q}_4$ and $\Ol$.

\begin{corollary}
The four common generating lines of the hyperbolic paraboloid $\mathcal{Q}_{\pm\infty}$ and the extended oloid $\Ol$ are equal to the common asymptotes of the edge of regression $\R$ and the touching curve $\mathcal{C}_{\pm\infty}$.
\end{corollary}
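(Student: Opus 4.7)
The plan is to verify directly that each candidate line $\widetilde{\A}_k$ listed in \eqref{tilde(A)} lies simultaneously on the hyperbolic paraboloid $\mathcal{Q}_{\pm\infty}$ and on the extended oloid $\Ol$, and then to invoke the known count of four common generating lines together with Theorem \ref{asymptotes} to identify these lines as the common asymptotes of $\R$ and $\mathcal{C}_{\pm\infty}$. This is cleaner than passing to the limit $\la\to\pm\infty$ in the formulas of Theorem \ref{common_lines}(i), though the two approaches are essentially equivalent.

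First I would observe that each $\widetilde{\A}_k$ is already a generating line of $\Ol$: comparing \eqref{tilde(A)} with \eqref{PG} specialised at $t=\pm\pi/2$ shows that $\widetilde{\A}_1,\widetilde{\A}_4$ come from $t=-\pi/2$ and $\widetilde{\A}_2,\widetilde{\A}_3$ from $t=\pi/2$, the two sign choices for $z$ accounting for the pairs. Next I would substitute a generic point $(\pm(m-1),\,m-\tfrac12,\,\pm m)$ into the paraboloid equation $x^2-z^2+2y=0$ from the classification table for $\la=\pm\infty$; since $(m-1)^2-m^2+2(m-\tfrac12)$ vanishes identically in $m$ and the signs of $x$ and $z$ appear only squared, all four $\widetilde{\A}_k$ lie on $\mathcal{Q}_{\pm\infty}$. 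The paragraph preceding Theorem \ref{common_lines} asserts that $\mathcal{Q}_{\pm\infty}$ and $\Ol$ share exactly four common generating lines, and the four $\widetilde{\A}_k$ are pairwise distinct (their direction vectors $(1,1,\pm1)$, $(-1,1,\pm1)$ and base points $(\mp 1,-\tfrac12,0)$ differ), so they must be precisely those four common generating lines.

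Finally, Theorem \ref{asymptotes} says that $\widetilde{\A}_k=\lim_{\la\to\pm\infty}\A_k(\la)$, which is an asymptote of $\mathcal{C}_{\pm\infty}$; by construction in \eqref{tilde(A)}, $\widetilde{\A}_k$ is also an asymptote of $\R$. Hence the four $\widetilde{\A}_k$ are the common asymptotes of $\R$ and $\mathcal{C}_{\pm\infty}$, and combining this with the previous paragraph gives the corollary. There is no genuine obstacle in the argument; the only point requiring care is the sign bookkeeping in the $\pm$ patterns that label the four symmetry branches of $\widetilde{\A}_k$, $\G_k(\pm\infty)$ and $\A_k(\pm\infty)$, which must be matched consistently. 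As an alternative route one could take the $\la\to\pm\infty$ limits of $\ot_j(m,\la)$ directly, using $\rho(\la)=\la-\tfrac12+O(1/\la)$ (valid on both branches thanks to the $\mathrm{sgn}(\la)$ factor) to obtain $\ot_1\to m-1$, $\ot_2\to m-\tfrac12$, $\ot_3\to m$, and recover the same identification.
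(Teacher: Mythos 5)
Your proof is correct, but it reaches the identification of the four lines by a different route than the paper. The paper's proof simply passes to the limit $\la\to\pm\infty$ in the parametrizations $\ot_j(m,\la)$ of Theorem \ref{common_lines}(i) (exactly your ``alternative route'' via $\rho(\la)=\la-\tfrac12+O(1/\la)$), reads off $\G_k(\pm\infty)=\widetilde{\A}_k$, and then concludes with Theorem \ref{asymptotes} and Eqs.\ \eqref{tilde(A)}; it implicitly takes for granted that the limits of the common generating lines of $\Q$ and $\Ol$ are the common generating lines of the limit surface $\mathcal{Q}_{\pm\infty}$. Your version instead verifies membership directly: $\widetilde{\A}_k$ is a generating line of $\Ol$ by \eqref{PG} at $t=\pm\pi/2$ (which is precisely how the paper derives \eqref{tilde(A)} in the first place), the identity $(m-1)^2-m^2+2\bigl(m-\tfrac12\bigr)=0$ puts it on the paraboloid, and the count of four common generating lines from the paragraph preceding Theorem \ref{common_lines} forces these four distinct lines to be all of them. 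This buys a small amount of rigor --- no continuity-of-the-limit argument is needed --- at the cost of leaning on the external count of four; the closing step, combining Theorem \ref{asymptotes} with the construction of \eqref{tilde(A)} to see that the $\widetilde{\A}_k$ are the common asymptotes of $\R$ and $\mathcal{C}_{\pm\infty}$, is identical in both arguments. Your sign bookkeeping ($\widetilde{\A}_1,\widetilde{\A}_4$ from $t=-\pi/2$, $\widetilde{\A}_2,\widetilde{\A}_3$ from $t=\pi/2$) checks out against the displayed values of $\om_j(m,\mp\pi/2)$.
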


\begin{proof}
From Theorem \ref{common_lines} one finds
\beq
  \lim_{\la\rightarrow\pm\infty}\ot_1(m,\la) = m-1\,,\;\;
  \lim_{\la\rightarrow\pm\infty}\ot_2(m,\la) = m-\frac{1}{2}\,,\;\;
  \lim_{\la\rightarrow\pm\infty}\ot_3(m,\la) = m\,,
\eeq
thus
\begin{align*}
 \G_1(\pm\infty) = {} & \{(m-1,\,m-1/2,\,m)\:|\:
	m\in\RR\}\,,\\[0.1cm]
 \G_2(\pm\infty) = {} & \{(1-m,\,m-1/2,\,m)\:|\:
	m\in\RR\}\,,\\[0.1cm]
 \G_3(\pm\infty) = {} & \{(1-m,\,m-1/2,\,-m)\:|\:
	m\in\RR\}\,,\\[0.1cm]
 \G_4(\pm\infty) = {} & \{(m-1,\,m-1/2,\,-m)\:|\:
	m\in\RR\}\,.
\end{align*}
Now, the result follows from Theorem \ref{asymptotes} with Eqs.\ \eqref{tilde(A)}. 
\end{proof}

\section{The development of $\Ol$}

Now we consider the development of the extended oloid $\Ol$ onto its tangent plane $E$. For this, we define a cartesian $\xi,\eta$-coordinate system in $E$ as follows: Let $E$ touch $\Ol$ along the generating line
\beq
  \mathcal{L}_0= \left\{\om_1(m,0),\,\om_2(m,0),\,-\om_3(m,0)\:|\:m\in\RR\right\}
\eeq
(see \eqref{omega_i} and the proof of Corollary \ref{touching_curve}). Then $\mathcal{L}_0$ is the $\eta$-axis, and the line perpendicular to $\mathcal{L}_0$ in the point  $m=0$ is the $\xi$-axis.

\noindent
Any curve $\mathcal{C}\subset\Ol$ is developed onto a plane curve~$\mathcal{C}^*\subset E$. A parametrization of $\mathcal{C}^*$ with the arc length $t$ of the double circular arc $\mathcal{C}_0$ as parameter can be obtained from the vector transformation in \cite[p.\ 114, Theorem 4]{Dirnboeck_Stachel}. In the following for abbreviation we put $c=\cos t$, $s=\sin t$. $\lfloor\cdot\rfloor$ denotes the integer part of~$\cdot$\,.     
\begin{theorem} \label{C*}
The development of the touching curve~$\C$ onto $E$ is the curve 
\beq
  \C^* = \left\{(\ka_1^*(\la,t),\,\ka_2^*(\la,t))\:|\:t\in\RR\right\}
\eeq
with parametrization
\begin{align*}
  \ka_1^*(\la,t) = {} & 
  \mathrm{sgn}(t)\cdot\left\lfloor\frac{3\,|t|}{4\pi}+\frac{1}{2}\right\rfloor\cdot\frac{4\pi}{3\,\sqrt{3}}
  + \mathrm{sgn}(h(t))\cdot\tilde{\ka}_1(\la,h(t))\,,\\[0.1cm]
  \ka_2^*(\la,t) = {} & 
  \tilde{\ka}_2(\la,h(t))\,,
\end{align*} 
where
\begin{align*}
  \tilde{\ka}_1(\la,t) = {} & 
	\frac{2\,\sqrt{3}}{9}\left(\arccos\frac{\sqrt{2}\,c}{\sqrt{1+c}}
	+ \frac{(1-2\la)\left|s\right|\sqrt{2(1+2c)}}{(1+\la c)\,\sqrt{1+c}}\right),
	\\[0.1cm]
  \tilde{\ka}_2(\la,t) = {} & 
	\frac{\sqrt{3}}{9}\left(\ln\frac{2}{1+c}
	+ \frac{4+7\la+(11\la-4)c}{1+\la c}\right),\\[0.1cm]
  h(t) = {} & t-\mathrm{sgn}(t)\cdot\left\lfloor\frac{3\,|t|}{4\pi}+\frac{1}{2}
	\right\rfloor\cdot\frac{4\pi}{3}\,.
\end{align*} 

\end{theorem}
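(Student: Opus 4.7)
The plan is to invoke \cite[p.~114, Theorem~4]{Dirnboeck_Stachel}, which provides an explicit vector transformation sending a point $\omega(m,t)$ on the generating line $\Lt$ of $\Ol$ to its image in the development plane $E$. That formula expresses the $\xi$-coordinate as the developed arc-length along $k_A$ plus the $\xi$-component of the step of length $m\,|AB_1(t)|$ along $\Lt$ after the tangent plane has been rolled onto $E$, and the $\eta$-coordinate analogously.

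First, for $t\in I_1$ I would substitute the value $m=\psi(\la,t)=\la(1+\cos t)/(1+\la\cos t)$ from the proof of Corollary~\ref{touching_curve}, so that $\omega(\psi(\la,t),t)=\gamma_1(\la,t)$ is the touching point of $\Lt$ with $\Q$. The remaining simplification is trigonometric: rewrite $1+\cos t=2\cos^2(t/2)$, collect the $(1-2\la)$-terms, and identify the first summand of $\tilde\ka_1$ as the normalized developed arc-length of the image of $k_A$, namely $(2\sqrt{3}/9)\arccos(\sqrt{2}c/\sqrt{1+c})$. The formula for $\tilde\ka_2$ then reduces analogously; its logarithmic term arises from integrating the $\eta$-component of the base-curve development, while the rational term comes from the $\eta$-contribution of the $m$-step along $\Lt$. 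The normalizing factors $2\sqrt{3}/9$ and $\sqrt{3}/9$ are read off directly from the metric data of $\Ol$ used in \cite[Thm.~4]{Dirnboeck_Stachel}.

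Next, to extend the parametrization from $t\in I_1$ to all of $\RR$, I would exploit the fact that $\mathcal{L}_0$ rolls periodically on $\Ol$: after $t$ advances by $4\pi/3$ (the length of $I_1$) the rolling plane returns to a congruent position, shifted in the $\xi$-direction by $4\pi/(3\sqrt{3})$. A short evaluation at the boundary $t=2\pi/3$ verifies this shift constant. The function $h(t)=t-\mathrm{sgn}(t)\lfloor 3|t|/(4\pi)+1/2\rfloor\cdot 4\pi/3$ folds $t$ back into the fundamental interval $(-2\pi/3,\,2\pi/3]$; the term $\mathrm{sgn}(t)\lfloor\cdot\rfloor\cdot 4\pi/(3\sqrt{3})$ accumulates the successive $\xi$-shifts across laps; and the prefactor $\mathrm{sgn}(h(t))$ encodes the reflection in $\xi$ that occurs when we pass onto the $\gamma_2$-branch, which by the symmetry of $\Ol$ in $\Z$ becomes a reflection in the $\eta$-axis of $E$.

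The main obstacle is the bookkeeping of this periodic/reflective extension. The application of the transformation of \cite{Dirnboeck_Stachel} to $\gamma_1(\la,t)$ on $I_1$ is essentially a routine, if tedious, trigonometric simplification. The subtler point is showing that the sign-and-floor combination in $\ka_1^*$ is precisely the correct encoding so that, as $t$ sweeps through successive intervals of length $4\pi/3$, one obtains in turn the development of $\gamma_2(\la,\cdot)$ and then the next periodic copies of $\gamma_1(\la,\cdot)$, and that the resulting curve $\C^*$ is continuous across the breakpoints $t=\pm 2\pi/3,\,\pm 2\pi,\ldots$ where $h$ itself is discontinuous. Checking continuity (and thus global well-definedness) at these breakpoints is the nontrivial consistency step that closes the proof.
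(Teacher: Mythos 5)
Your proposal follows essentially the same route as the paper's proof: substitute the touching-curve parametrization (equivalently, $m=\psi(\la,t)$ in $\om_j$) into the vector transformation of Dirnb\"ock--Stachel, simplify trigonometrically to obtain $\tilde\ka_1,\tilde\ka_2$ on the fundamental interval, and then extend to all of $\RR$ via the sign-and-floor bookkeeping, verifying the shift constant $4\pi/(3\sqrt{3})$ at the boundary. The paper phrases the $\mathrm{sgn}(h(t))$ factor as restoring oddness of $\ka_1^*$ on $I_1$ after replacing $\sin t$ by $|\sin t|$, which is the same reflection argument you describe, so no substantive difference remains.
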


\begin{proof}
Substituting $x=\ka_1(\la,t)$, $y=\ka_2(\la,t)$, $z=-\ka_3(\la,t)$, see Corollary~\ref{touching_curve}, in the vector transformation \cite[p.~114, Theorem 4]{Dirnboeck_Stachel}, a straight-forward calculation delivers 
\begin{align*}
  \xi = \tilde{\tilde{\ka}}_1(\la,t) = {} & 
	\frac{2\,\sqrt{3}}{9}\left(\arccos\frac{\sqrt{2}\,c}{\sqrt{1+c}}
	+ \frac{(1-2\la)\,s\,\sqrt{2(1+2c)}}{(1+\la c)\,\sqrt{1+c}}\right),
	\\[0.1cm]
  \eta = \tilde{\ka}_2(\la,t) = {} & 
	\frac{\sqrt{3}}{9}\left(\ln\frac{2}{1+c}
	+ \frac{4+7\la+(11\la-4)c}{1+\la c}\right).
\end{align*} 
$\tilde{\kappa}_2(\la,t)$ is valid for $t\in I_1$ (see \eqref{I_i}). The periodic continuation of $\tilde{\kappa}_2(\la,t)$ yields $\ka_2^*(\la,t)$, valid for $t\in\RR$.

$\tilde{\tilde{\ka}}_1(\la,t)$ is valid only for $t\in[0,2\pi/3]$. The restriction of $\ka_1^*(\la,t)$ to $t\in I_1$ must be an odd function. Replacing $\sin t$ by $\left|\sin t\right|$ in $\tilde{\tilde{\ka}}_1(\la,t)$, we get the even function $\tilde{\ka}_1(\la,t)$, $t\in I_1$. Now, $\ka_1^\diamond(\la,t):=\mathrm{sgn}(t)\cdot\tilde{\ka}_1(\la,t)$ is the required restriction of $\ka_1^*(\la,t)$. We get
\beq
  \ka_1^\diamond(\la,2\pi/3)-\ka_1^\diamond(\la,-2\pi/3)=\frac{4\pi}{3\,\sqrt{3}}\,,
\eeq
and therefore, using the step function
\beq
  \mathrm{sgn}(t)\cdot\left\lfloor\frac{3\,|t|}{4\pi}+\frac{1}{2}\right\rfloor\cdot\frac{4\pi}{3\,\sqrt{3}}\,,
\eeq
we have found $\ka_1^*(\la,t)$, valid for $t\in\RR$.      
\end{proof}

\noindent
For $\la=0$ we have 
\begin{align*}
  \tilde{\ka}_1(0,t) = {} &
  \frac{2\,\sqrt{3}}{9}\left(\arccos\frac{\sqrt{2}\,c}{\sqrt{1+c}}
	+ \frac{\left|s\right|\sqrt{2(1+2c)}}{\sqrt{1+c}}\right),\\[0.1cm]
  \tilde{\ka}_2(0,t) = {} & 
	\frac{\sqrt{3}}{9}\left(\ln\frac{2}{1+c}	+ 4(1-c)\right).  
\end{align*}
The following manipulation of the second term in the brackets of $\tilde{\ka}_1(0,t)$,
\begin{align*}
  \frac{\left|\sin t\right|\sqrt{2(1+2\cos t)}}{\sqrt{1+\cos t}}
  = \frac{2\left|\sin\frac{t}{2}\right|\left|\cos\frac{t}{2}\right|\sqrt{2(1+2\cos t)}}
		{\sqrt{2\cos^2\frac{t}{2}}}\displaybreak[0]\\
  & \hspace{-8.4cm} = \sqrt{2}\:\left|\sin\frac{t}{2}\right|\sqrt{2(1+2\cos t)}
  = \sqrt{2}\;\sqrt{\frac{1}{2}(1-\cos t)}\;\sqrt{2(1+2\cos t)}\\[0.2cm]
  & \hspace{-8.4cm} = \sqrt{2(1+2\cos t)(1-\cos t)}\,,
\end{align*}
shows that we have obtained the result of \cite[p.\ 108, Theorem~2]{Dirnboeck_Stachel}. For $\la=1$ we get
\begin{align*}
  \tilde{\ka}_1(1,t) = {} &
  \frac{2\,\sqrt{3}}{9}\left(\arccos\frac{\sqrt{2}\,c}{\sqrt{1+c}}
	- \frac{\left|s\right|\sqrt{2(1+2c)}}{(1+c)^{3/2}}\right),\\[0.1cm]
  \tilde{\ka}_2(1,t) = {} & 
	\frac{\sqrt{3}}{9}\left(\ln\frac{2}{1+c}	+ \frac{11+7c}{1+c}\right).  
\end{align*}
This is the result of \cite[p.\ 112, Theorem~3]{Dirnboeck_Stachel}. For $\la=1/2$ we have
\begin{align*}
  \tilde{\ka}_1(1/2,t) = {} &
  \frac{2\,\sqrt{3}}{9}\arccos\frac{\sqrt{2}\,c}{\sqrt{1+c}}\,,\\[0.1cm]
  \tilde{\ka}_2(1/2,t) = {} & 
	\frac{\sqrt{3}}{9}\left(\ln\frac{2}{1+c}	+ \frac{3(5+c)}{2+c}\right),  
\end{align*}
which is the result of \cite[p.\ 115]{Dirnboeck_Stachel}. Examples with $\la\in[0,1]$ are shown in Fig.~\ref{Abw_0-1}.

\begin{figure}[h]
\begin{center}
  \includegraphics[scale=0.78]{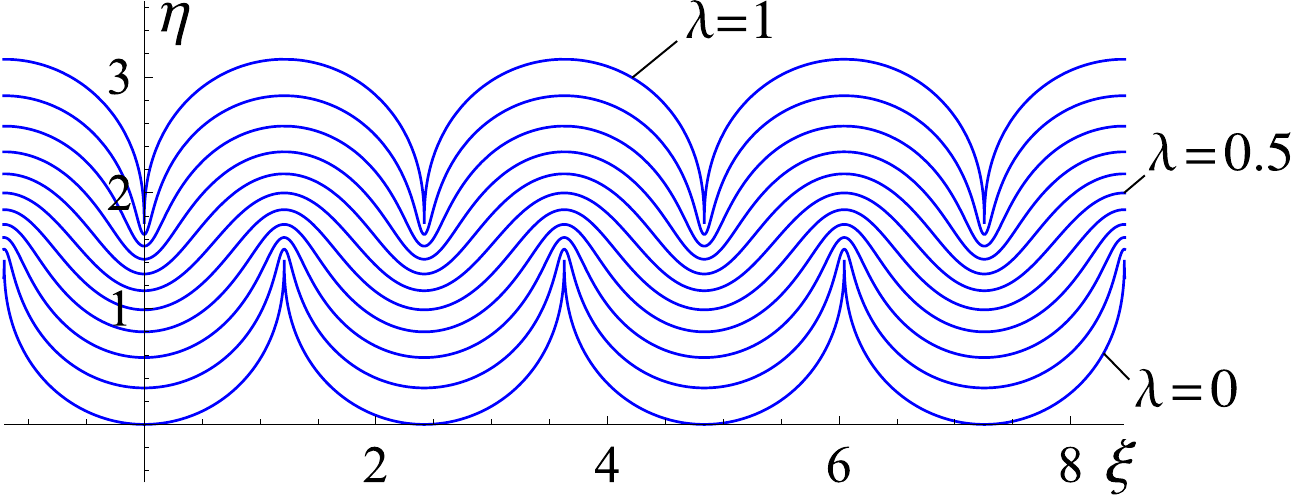}
  \caption{\label{Abw_0-1} The curves $\mathcal{C}_\la^*$, $\la=0,\,0.1,\,0.2,\ldots,\,0.9,\,1$}
\end{center}
\vspace{-0.8cm}
\end{figure}

\noindent
For $\la\rightarrow\pm\infty$ (see Fig.\ \ref{Abw_inf_a} and Fig.\ \ref{Abw_inf}) one easily finds
\begin{align*}
  \lim_{\la\rightarrow\pm\infty}\tilde{\kappa}_1(\la,t) = {} & 
	\frac{2\,\sqrt{3}}{9}\left(\arccos\frac{\sqrt{2}\,c}{\sqrt{1+c}}
	- \frac{2\left|s\right|\sqrt{2(1+2c)}}{c\,\sqrt{1+c}}\right),\\[0.1cm] 
  \lim_{\la\rightarrow\pm\infty}\tilde{\kappa}_2(\la,t) = {} &
	\frac{\sqrt{3}}{9}\left(\ln\frac{2}{\sqrt{1+c}}
	+ \frac{7+11c}{c}\right). 
\end{align*}
As further examples, the curves $\mathcal{C}_{-0.5}^*$ and $\mathcal{C}_{1.5}^*$ are shown in Fig.\ \ref{Abw_1_5__-0_5_a} and Fig.~\ref{Abw_1_5__-0_5}.\\[0.2cm] 
After substituting 
\begin{align*}
  x = g_1(t) = {} & \frac{\sin t-\tan t}{3}\,,\quad
  y = g_2(t) = \frac{2+3\cos t-3\cos^2 t-2\cos^3 t}
			{6(1+\cos t)\cos t}\,,\displaybreak[0]\\
  z = g_3(t) = {} & -\frac{(1+2\cos t)^{3/2}}{3(1+\cos t)\cos t}
\end{align*}
(see Theorem \ref{Gratlinie}) in the vector transformation \cite[p.~114, Theorem 4]{Dirnboeck_Stachel}, analogous steps as in the proof of Theorem \ref{C*} result in the development $\R^*$ of the edge of regression $\R$. We state the result in the following theorem.

\begin{theorem}
The development of $\R$ onto $E$ is the curve 
\beq
  \R^* = \left\{(g_1^*(t),\,g_2^*(t))\:|\:t\in\RR\right\}
\eeq
with parametrization
\begin{align*}
  g_1^*(t) = {} & 
  \mathrm{sgn}(t)\cdot\left\lfloor\frac{3\,|t|}{4\pi}+\frac{1}{2}\right\rfloor\cdot\frac{4\pi}{3\,\sqrt{3}}
  + \mathrm{sgn}(h(t))\cdot \tilde{g}_1(h(t))\,,\\[0.1cm]
  g_2^*(t) = {} & 
  \tilde{g}_2(h(t))\,,
\end{align*} 
where
\begin{align*}
  \tilde{g}_1(t) = {} & 
	\frac{2\,\sqrt{3}}{9}\left(\arccos\frac{\sqrt{2}\,c}{\sqrt{1+c}}
	- \frac{(2+2c-c^2)\,\sqrt{2(1+2c)}\left|s\right|}{3c\,(1+c)^{3/2}}\right),
	\\[0.1cm]
  \tilde{g}_2(t) = {} & 
	\frac{\sqrt{3}}{9}\left(\ln\frac{2}{1+c}
	+ \frac{7+33c+18c^2-4c^3}{3c\,(1+c)}\right).    
\end{align*} 
\end{theorem}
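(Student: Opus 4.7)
The plan is to mirror the proof of Theorem~\ref{C*} with $\R$ in place of $\C$, exploiting a conceptual shortcut that avoids repeating the full vector-transformation calculation. The shortcut is as follows: from the proof of Theorem~\ref{Gratlinie}, the point $(g_1(t),g_2(t),-g_3(t))\in\R$ equals $\ka(\la,t)$ evaluated at $\la=\phi(t):=(1+2\cos t)/((2+\cos t)\cos t)$, so this point lies on $\mathcal{C}_{\phi(t)}\subset\Ol$ and on the generating line $\mathcal{L}_t$. Because the development onto the fixed tangent plane $E$ along $\mathcal{L}_0$ is an isometry of surface patches that depends only on the point of $\Ol$ (together with the generating line through it), the developed image of a point of $\R$ coincides with its image regarded as a point of $\mathcal{C}_{\phi(t)}$. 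Hence $\tilde g_j(t)=\tilde\ka_j(\phi(t),t)$ for $j=1,2$, and the proof reduces to an algebraic substitution.

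Carrying out that substitution with $c=\cos t$, $s=\sin t$, one computes $1+\la c=3(1+c)/(2+c)$ and $1-2\la=-(2+2c-c^2)/((2+c)c)$. Inserting these into the fractional term of $\tilde{\ka}_1$ produces exactly $-(2+2c-c^2)\sqrt{2(1+2c)}\,|s|/(3c(1+c)^{3/2})$, giving the claimed $\tilde g_1$. Inserting them into the numerator $4+7\la+(11\la-4)c$ of $\tilde{\ka}_2$ and dividing by $1+\la c$ yields $(7+33c+18c^2-4c^3)/(3c(1+c))$, giving the claimed $\tilde g_2$. The $\arccos(\sqrt{2}\,c/\sqrt{1+c})$ and $\ln(2/(1+c))$ summands are $\la$-independent and pass through unchanged. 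Alternatively, one may carry out the substitution $x=g_1(t)$, $y=g_2(t)$, $z=-g_3(t)$ directly in the vector transformation of \cite[p.~114, Theorem~4]{Dirnboeck_Stachel}; the resulting integrals have the same form as in Theorem~\ref{C*}, and the half-angle identity $\sqrt{1+c}=\sqrt{2}\,|\cos(t/2)|$ (already used in the $\la=0$ verification following Theorem~\ref{C*}) brings the expressions into the stated shape.

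With $\tilde g_1$ and $\tilde g_2$ established on the fundamental interval $I_1$, the extension to $t\in\RR$ repeats the end of the proof of Theorem~\ref{C*}: $\tilde g_2$ is continued periodically with period $4\pi/3$ to give $g_2^*$; $\tilde g_1$ is made odd on $I_1$ (by the symmetry of $\R$ with respect to $\X$) and then translated by $4\pi/(3\sqrt 3)$ along the $\xi$-axis per rolling cycle, this shift being the same $\xi$-displacement $\ka_1^\diamond(\la,2\pi/3)-\ka_1^\diamond(\la,-2\pi/3)$ already identified there. The step function $\mathrm{sgn}(t)\lfloor 3|t|/(4\pi)+1/2\rfloor\cdot 4\pi/(3\sqrt 3)$ then packages the cumulative shifts, while $h(t)$ localizes the argument into the fundamental period. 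The main obstacle is keeping the algebraic simplifications in the middle step tidy; the factor $1/c$ inherited from $\tan t$ in $g_1$ and from $\cos t$ in the denominators of $g_2,g_3$ makes the cancellations slightly more delicate than in Theorem~\ref{C*}, but no new ideas are required beyond verifying the identity $\tilde g_j(t)=\tilde\ka_j(\phi(t),t)$.
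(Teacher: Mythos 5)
Your proposal is correct, and its main route is a genuine (and tidier) variant of the paper's. The paper simply substitutes $x=g_1(t)$, $y=g_2(t)$, $z=-\,(1+2\cos t)^{3/2}/(3(1+\cos t)\cos t)$ into the Dirnb\"ock--Stachel vector transformation and repeats the steps of the proof of Theorem \ref{C*} from scratch; you instead exploit the identity $g_j(t)=\ka_j(\phi(t),t)$ from the proof of Theorem \ref{Gratlinie} to conclude $\tilde g_j(t)=\tilde\ka_j(\phi(t),t)$, so that only the substitution $\la=\phi(t)$ into the already-established formulas of Theorem \ref{C*} remains. This is legitimate: the development is applied pointwise along the generating line $\mathcal{L}_t$, and $\tilde\ka_j(\la,t)$ is by construction the image of the point $\big(\ka_1(\la,t),\ka_2(\la,t),-\ka_3(\la,t)\big)$ under the transformation at parameter $t$, so specializing $\la$ to $\phi(t)$ gives exactly the image of the point of $\R$ on $\mathcal{L}_t$. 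Your intermediate identities check out: $1+\phi(t)\,c=3(1+c)/(2+c)$, $1-2\phi(t)=-(2+2c-c^2)/\big((2+c)c\big)$, and $4+7\la+(11\la-4)c$ with $\la=\phi(t)$ becomes $(7+33c+18c^2-4c^3)/\big((2+c)c\big)$, which after division by $1+\la c$ yields the stated $\tilde g_2$; likewise the fractional term of $\tilde\ka_1$ collapses to $-(2+2c-c^2)\sqrt{2(1+2c)}\,|s|/\big(3c(1+c)^{3/2}\big)$. The periodic continuation via $h(t)$ and the step function is identical to the paper's. What your route buys is that the second pass through the vector transformation is avoided and the relation between $\R^*$ and the family $\C^*$ is made explicit ($\R^*$ is traced by evaluating the family at $\la=\phi(t)$); what the paper's direct substitution buys is independence from the pointwise-compatibility argument, which you state somewhat informally but which is sound here since every point of $\R$ is handled with the same line parameter $t$ in both computations.
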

\vspace{0.2cm}
\noindent
$\R^*$ is shown in the Figures \ref{Abw_1_5__-0_5_a}, \ref{Abw_inf_a}, \ref{Abw_inf}. 

\begin{figure}[h]
\begin{center}
  \hspace{0.06cm}\includegraphics[scale=0.93]{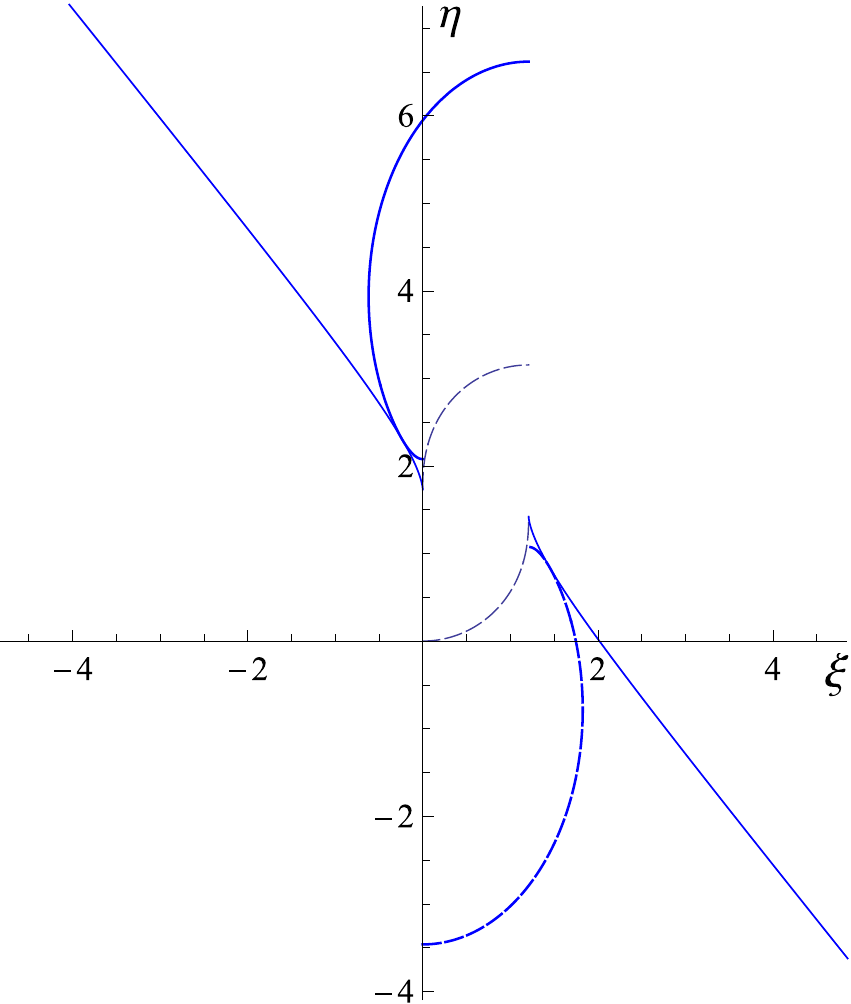}\\
  \vspace{-0.2cm}
  \caption{\label{Abw_1_5__-0_5_a}$\mathcal{C}_{-0.5}^*$ (thick, dashed), $\mathcal{C}_{1.5}^*$ (thick), $\R^*$ (thin), $\mathcal{C}_0^*$, $\mathcal{C}_1^*$; $0\leq t\leq 2\pi/3$}
\end{center}
\begin{center}
  \hspace{0.06cm}\includegraphics[scale=0.93]{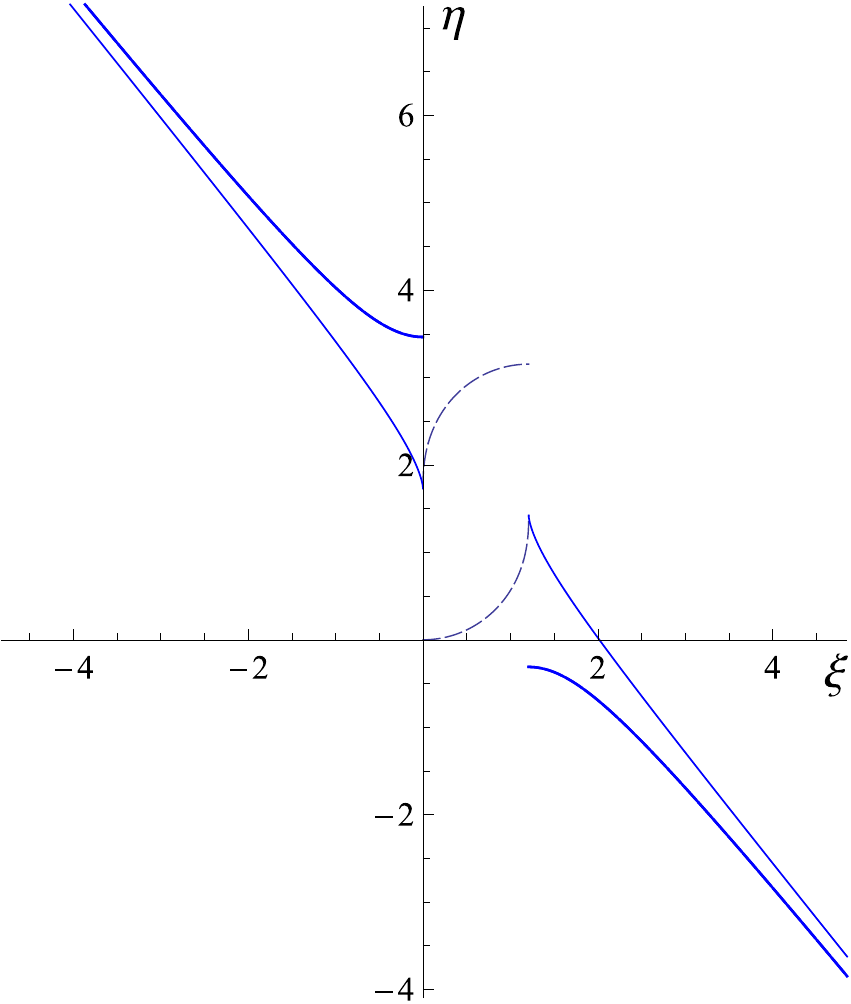}\\
  \vspace{-0.2cm}
  \caption{\label{Abw_inf_a} $\mathcal{C}_{\pm\infty}^*$ (thick), $\R^*$ (thin), $\mathcal{C}_0^*$ and $\mathcal{C}_1^*$ (dashed); $0\leq t\leq 2\pi/3$}
\end{center}
\end{figure}

\begin{figure}[h]
\begin{center}
  \vspace{0.04cm}
  \includegraphics[scale=0.93]{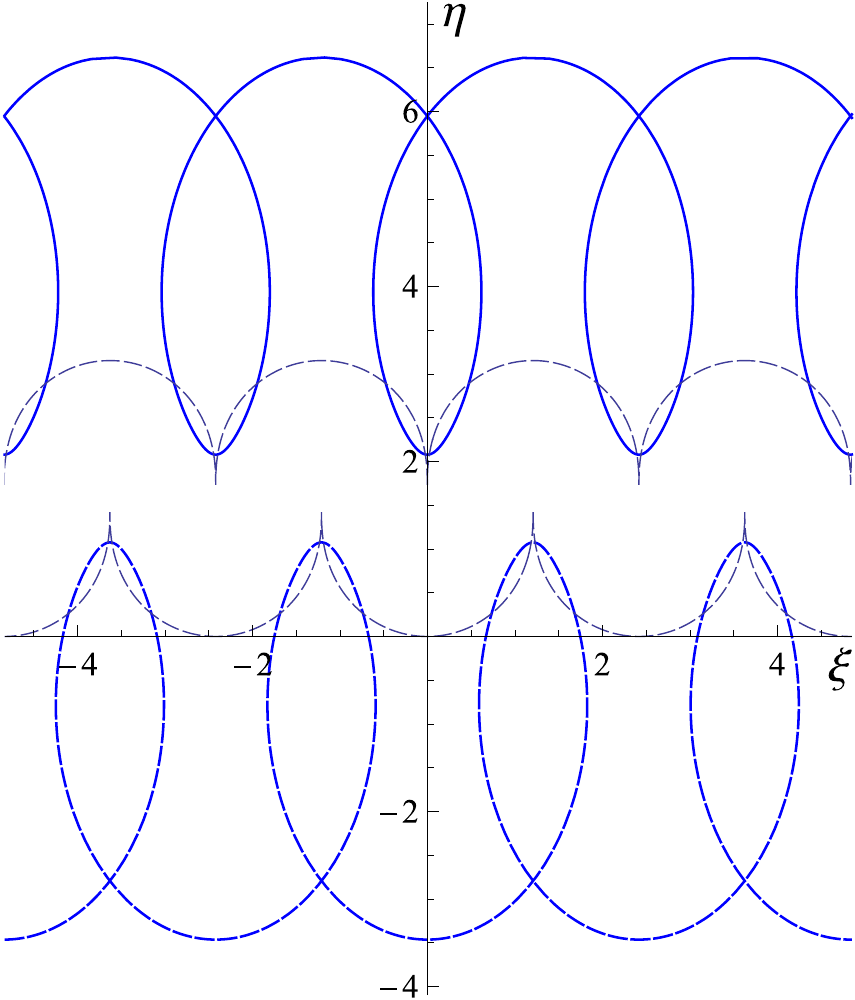}\\
  \vspace{-0.2cm}
  \caption{\label{Abw_1_5__-0_5} $\mathcal{C}_{-0.5}^*$ (thick, dashed), $\mathcal{C}_{1.5}^*$ (thick),  $\mathcal{C}_0^*$ and $\mathcal{C}_1^*$ (thin, dashed)} 
\end{center}
\begin{center}
  \includegraphics[scale=0.93]{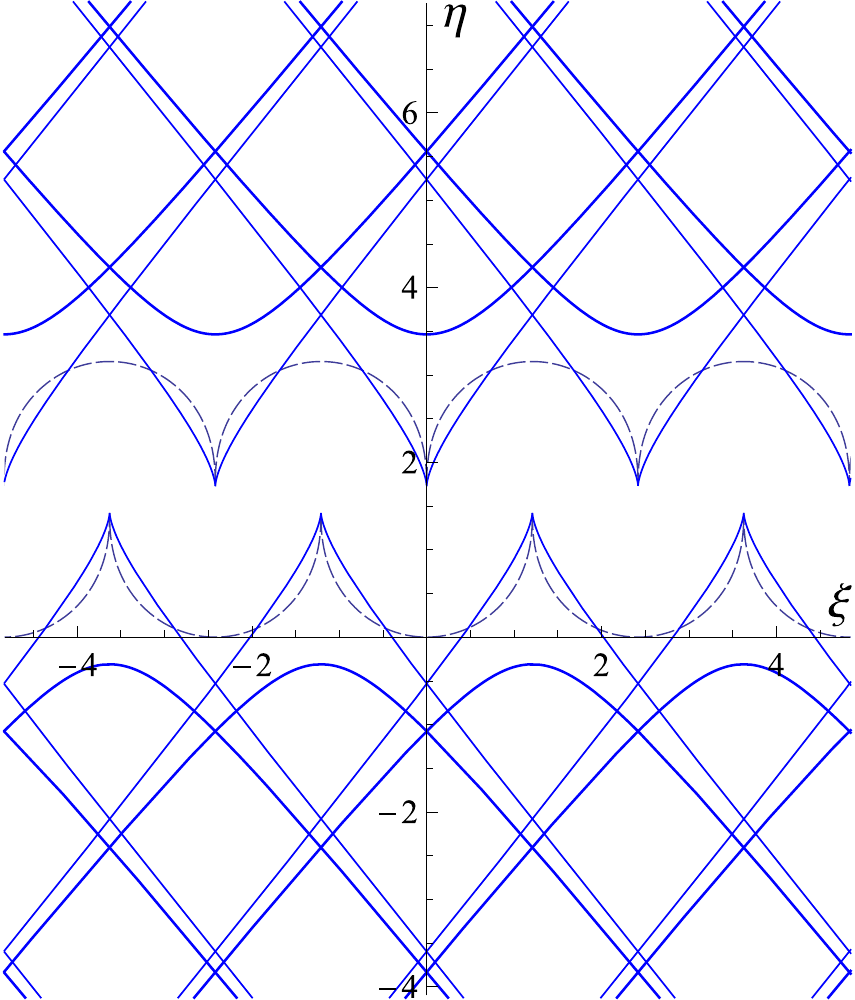}\\
  \vspace{-0.2cm}
  \caption{\label{Abw_inf} $\mathcal{C}_{\pm\infty}^*$ (thick), $\R^*$ (thin), $\mathcal{C}_0^*$ and $\mathcal{C}_1^*$ (dashed)}
\end{center}
\end{figure}

\clearpage

\newpage

\bigskip
\begin{center}
\begin{tabular}{c@{\qquad\quad}c}
Uwe B\"asel & Hans Dirnb\"ock\\[0.15cm]
HTWK Leipzig, Fakult\"at & Nussberg 22\\
Maschinenbau und Energietechnik, & 9062 Moosburg, Austria\\
Karl-Liebknecht-Stra{\ss}e 134, & \\
04277 Leipzig, Germany & \\[0.15cm]
{\small uwe.baesel@htwk-leipzig.de} & 
\end{tabular}
\end{center}
\end{document}